\newtheorem{theorem}{Theorem}
\newtheorem{proposition}[theorem]{Proposition}
\newtheorem{definition}[theorem]{Definition}
\newtheorem{lemma}[theorem]{Lemma}
\newtheorem{example}[theorem]{Example}
\newtheorem{remark}[theorem]{Remark}
\begin{document}\title[Limit cycles and chaos in planar hybrid systems]{Limit cycles and chaos in planar hybrid systems}
	
\author[Jaume Llibre and Paulo Santana]
{Jaume Llibre$^1$ and Paulo Santana$^2$}
	
\address{$^1$ Departament de Matem\`{a}tiques, Facultat de Ci\`{e}ncies, Universitat Aut\`{o}noma de Barcelona, 08193 Bellaterra, Barcelona, Spain}
\email{jaume.llibre@uab.cat}
	
\address{$^2$ IBILCE-UNESP, CEP 15054-000, S. J. Rio Preto, S\~ao Paulo, Brazil}
\email{paulo.santana@unesp.br}
	
\subjclass[2020]{Primary: 34A38.}
	
\keywords{Hybrid dynamical systems; Limit cycles; Chaos}
	
\begin{abstract}
	In this paper we study the family of planar hybrid differential systems formed by two linear centers and a polynomial reset map of any degree. We study their limit cycles and also provide examples of these hybrid systems exhibiting chaotic dynamics.
\end{abstract}
	
\maketitle
	
\section{Introduction}\label{Sec1}

An \emph{hybrid dynamical system} is a system whose dynamics are governed both by continuous and discrete laws. That is, a system that can both \emph{flow} and \emph{jump}. The field of hybrid dynamical systems is relatively new and its notion is very wide, encompassing a myriad of different definitions and formalisms, depending on which scientific community is approaching it. Following Schaft and Schumacher~\cite[Section~$1.1$]{SchSch2000}, we quote: 

\emph{[$\dots$] the area of hybrid systems is still in its infancy and a general theory of hybrid systems seems premature. More inherently, hybrid systems is such a wide notion that sticking to a single definition shall be too restrictive. [$\dots$] Another difficulty in discussing hybrid systems is that various scientific communities with their own approaches have contributed (and are still contributing) to the area. At least the following three communities can be distinguished. [The] computer science community [$\dots$] [the] modeling and simulation community [$\dots$] [and the] systems and control community}.

Therefore, in this paper it is not our goal to provide a complete introduction to the whole field of hybrid dynamical systems. For this, we refer to the books of Schaft and Schumacher~\cite{SchSch2000} and di Bernardo et al \cite{Bernardo2008}. Rather, our goal is to provide an example of approach in this area by applying the tools of \emph{Qualitative Theory of Differential Equations and Dynamical Systems}. One of the usual examples of an hybrid dynamical system is the \emph{bouncing ball model}

\begin{example}
	Consider the vertical motion of a ball dropped (or tossed) from an initial height $h>0$, with initial velocity $v\in\mathbb{R}$ (here negative velocity means downwards, following gravity, while positive velocity means that the ball was tossed upwards). If the ball is under the acceleration of constant gravity $g>0$, then for $h>0$ the state of the ball is governed by the system of differential equations,
	\begin{equation}\label{18}
		\dot h=v, \quad \dot v=-g.
	\end{equation}
	Therefore given any initial condition $q=(h_0,v_0)$, $h_0>0$, it follows from \eqref{18} that the ball reaches the ground $h=0$ after a finite amount of time $t_0>0$, with velocity $v(t_0)<0$. See Figure~\ref{Fig5}.
\begin{figure}[h]
	\begin{center}
		\begin{overpic}[height=5cm]{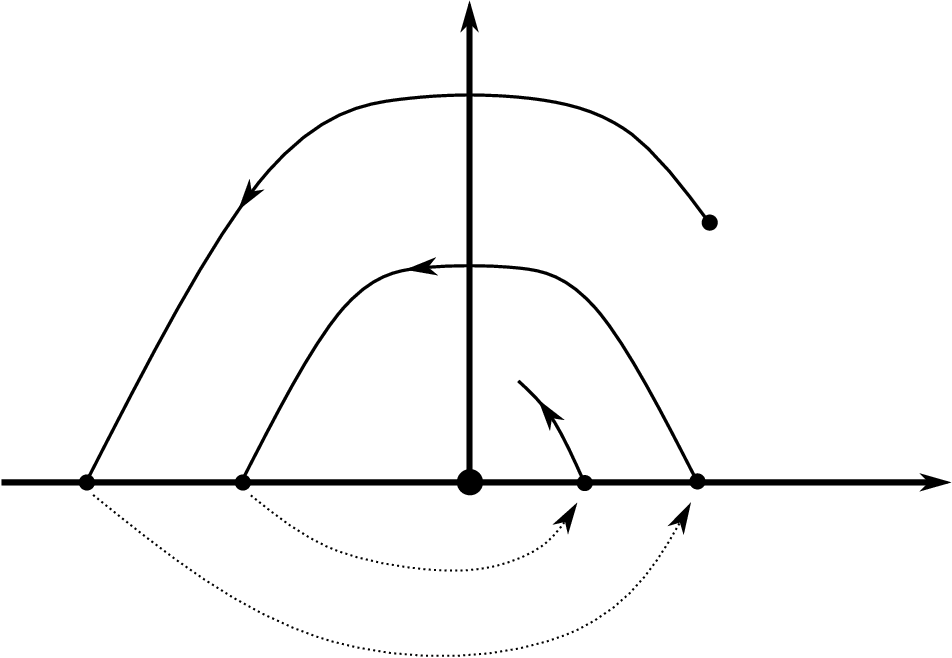} 
			\put(51,65){$h$}
			\put(97,20){$v$}
			\put(45,13){$\mathcal{O}$}
			\put(55,8){$\varphi$}
			\put(65,3){$\varphi$}
			\put(76,45){$q$}			
		\end{overpic}		
	\end{center}
\caption{Illustration of an orbit of the bouncing ball model. For simplicity, we interchanged the coordinate axes.}\label{Fig5}
\end{figure}
At this instant, the ball bounces back upwards and its velocity undergoes an instantaneous change $v(t_0)\mapsto -\rho v(t_0)$, modeled by an inelastic collision with dissipate factor $0<\rho<1$. That is, at $h=0$ our system undergoes a jump (or reset) $\varphi$ given by $\varphi(0,v)=(0,-\rho v)$. Roughly speaking, the reset map represents the instantaneous loss of energy that the system suffers when hitting the ground. Observe that any orbit converges to the origin, which represents a ball standing still in the ground.
\end{example}

For more details on the bouncing ball model, we refer to \cite[Section $2.2.3$]{SchSch2000} and \cite[Section $1.2$]{Bernardo2008}. Although the field of hybrid systems is relatively new, it has been receiving attention in very recent years with works on impact limit cycles \cite{LiLiu2023}, stability of hybrid polycycles \cite{SanSer2024}, Melnikov method to detect limit cycles \cite{LiWeiZhaKap2023} and on higher dimensions on the euclidian space \cite{WanZha2024}.

We now present the family of hybrid dynamical systems that we study in this paper. Let $\Sigma=\{(x,y)\in\mathbb{R}^2\colon x=0\}$ be a switching manifold. For simplicity we identify any map $\varphi\colon\Sigma\to\Sigma$ with its projection on the second coordinate. Let
\[
A_1=\{(x,y)\in\mathbb{R}^2\colon x<0\}, \quad A_2=\{(x,y)\in\mathbb{R}^2\colon x>0\},
\]
and let $\overline{A_i}$ denote the topological closure of $A_i$, $i\in\{1,2\}$. Of course, $A_1$ represents the left half flow and $A_2$ represents the right half flow. Given $n\geqslant1$, let $\mathfrak{X}_n$ be the class of the hybrid dynamical systems $X=(X_1,X_2;\Sigma;\varphi_n)$ such that $X_1$ and $X_2$ are linear vector fields with a center singularity, $\varphi_n\colon\Sigma\to\Sigma$ is a polynomial of degree $n$ and such that $X_i$ is defined over $\overline{A_i}$, $i\in\{1,2\}$. We explain the dynamics of $X\in\mathfrak{X}_n$ by defining a global orbit $\Gamma(q,k,t)$ of $X$ through an initial point $q\in\mathbb{R}^2$, with $k\in\mathbb{N}$, $t\geqslant0$ and $\Gamma(q,1,0)=q$.

Suppose first $q\in\mathbb{R}^2\backslash\Sigma$. In this case $\Gamma(q,k,t)$ is unique and works as follows. Without loss of generality suppose $q\in A_1$. Let $\gamma_1(t)$ be the orbit of $X_1$ with initial condition $q$. If $\gamma_1$ never intersects $\Sigma$, then $\Gamma(q,k,t)\equiv\Gamma(q,1,t)$ and $\Gamma(q,1,t)=\gamma_1(t)$ for every $t\geqslant0$. Suppose now that there is a $t_1>0$ such that $q_1=\gamma_1(t_1)\in\Sigma$ and $\gamma_1(t)\not\in\Sigma$ for all $t\in[0,t_1)$. After reaching $q_1$ the orbit \emph{jumps} to $q_2=\varphi_n(q_1)$. Let $\gamma_2(t)$ be the orbit of $X_2$ with initial condition $q_2$. Observe that exactly one of the following statements hold.
\begin{enumerate}[label=(\roman*)]
	\item $\gamma_2(\varepsilon)\in A_2$ for $\varepsilon>0$ sufficiently small.
	\item $\gamma_2(\varepsilon)\in\overline{A_1}$ for $\varepsilon>0$ sufficiently small.
\end{enumerate}
If $(i)$ holds, then we can enter in $A_2$ by following $\gamma_2(t)$ and thus the process repeats. That is, we follow $\gamma_2(t)$ until we reach $\Sigma$ again in a point $\overline{q_2}=\gamma_2(t_2)$, $t_2>0$, and then we jump to $q_3=\varphi_n(\overline{q_2})$, try to enter in $A_1$ and so on. See Figure~\ref{Fig6}.
\begin{figure}[h]
	\begin{center}
		\begin{minipage}{5cm}
			\begin{center}
				\begin{overpic}[height=5cm]{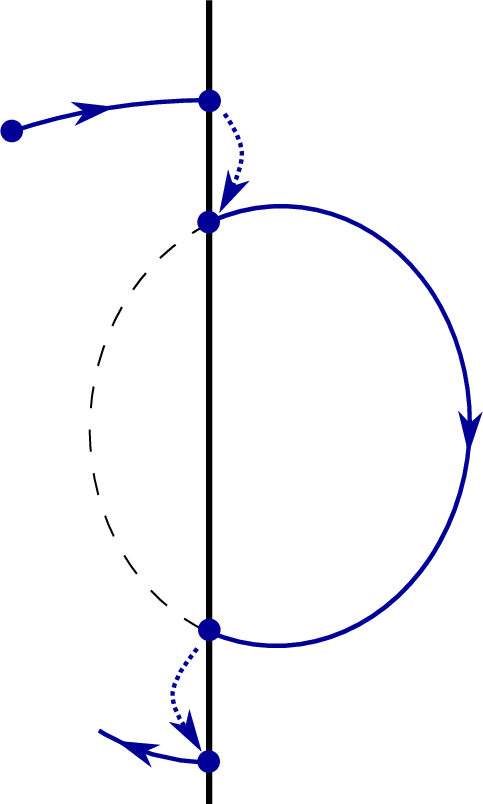} 
					\put(20,96){$\Sigma$}
					\put(0,78){$q$}
					\put(10,89){$\gamma_1$}
					\put(28,87){$q_1$}
					\put(31,80){$\varphi_n$}
					\put(28,68){$q_2$}
					\put(51,68){$\gamma_2$}
					\put(28,23){$\overline{q_2}$}
					\put(12,14){$\varphi_n$}
					\put(29,4){$q_3$}
				\end{overpic}
				
				$(a)$
			\end{center}
		\end{minipage}
		\begin{minipage}{5cm}
			\begin{center}
				\begin{overpic}[height=5cm]{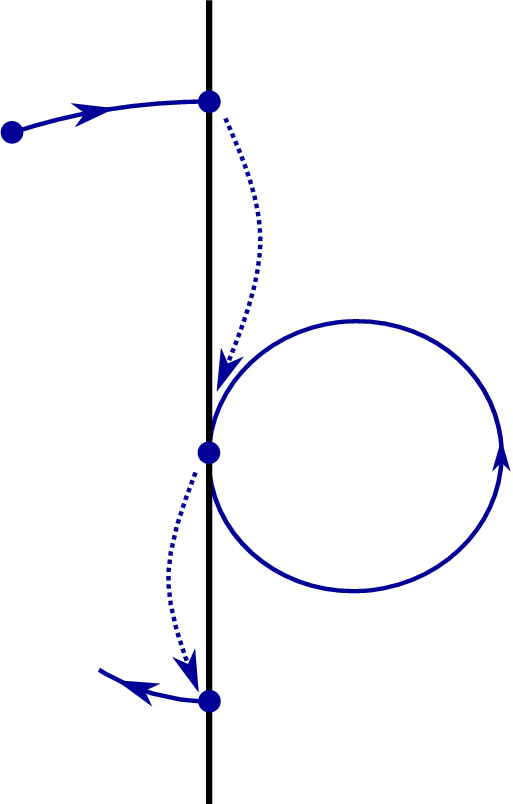} 
					\put(20,96){$\Sigma$}
					\put(0,78){$q$}
					\put(10,89){$\gamma_1$}
					\put(28,87){$q_1$}
					\put(34,70){$\varphi_n$}
					\put(28,42.5){$q_2=\overline{q_2}$}
					\put(55,25){$\gamma_2$}
					\put(28,11){$q_3$}
					\put(12,28){$\varphi_n$}
				\end{overpic}
				
				$(b)$
			\end{center}
		\end{minipage}
	\end{center}
	\caption{Illustrations of some orbits of $X$.}\label{Fig6}
\end{figure}
If $(ii)$ holds, then we cannot enter in $A_2$ by following $\gamma_2(t)$ and thus we jump to $q_3=\varphi_n(q_2)$. Let $\gamma_3(t)$ be the orbit of $X_1$ with initial condition $q_3$. Similarly to the previous case, we now try to enter in $A_1$ by following $\gamma_3$. If it is possible, then we are done. If not, then we jump again and try to enter in $A_2$ and so on. See Figure~\ref{Fig7}.
\begin{figure}[h]
	\begin{center}	
		\begin{minipage}{5cm}
			\begin{center}
				\begin{overpic}[height=5cm]{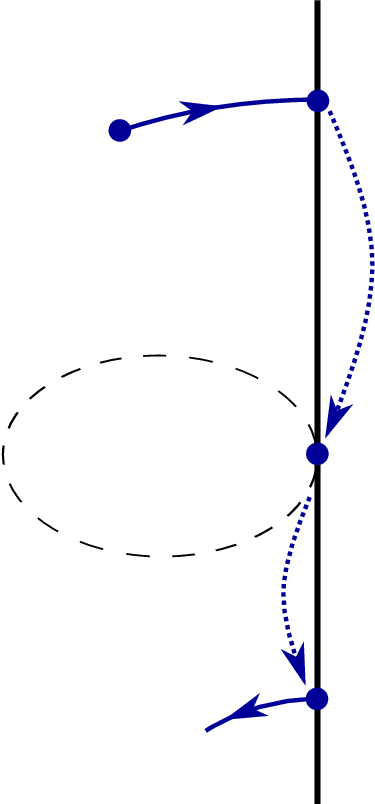} 
					\put(33,96){$\Sigma$}
					\put(13,78){$q$}
					\put(23,89.5){$\gamma_1$}
					\put(42,88){$q_1$}
					\put(47,67){$\varphi_n$}
					\put(42,42){$q_2$}
					\put(5,27){$\gamma_2$}
					\put(42,12){$q_3$}
					\put(25,5){$\gamma_3$}
					\put(26,23){$\varphi_n$}
				\end{overpic}
				
				$(a)$
			\end{center}
		\end{minipage}
		\begin{minipage}{5cm}
			\begin{center}
				\begin{overpic}[height=5cm]{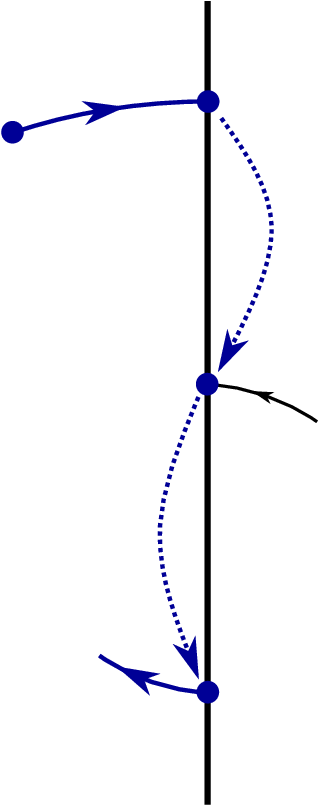} 
					\put(19,96){$\Sigma$}
					\put(0,78){$q$}
					\put(10,89){$\gamma_1$}
					\put(28,88){$q_1$}
					\put(35,71){$\varphi_n$}
					\put(18,54){$q_2$}
					\put(35,43){$\gamma_2$}
					\put(28,12){$q_3$}
					\put(15,10){$\gamma_3$}
					\put(9,35){$\varphi_n$}
				\end{overpic}
				
				$(b)$
			\end{center}
		\end{minipage}
	\end{center}
	\caption{Illustrations of some orbits of $X$.}\label{Fig7}
\end{figure}
In particular, observe that there is the possibility of never leaving $\Sigma$. Hence, $\Gamma(q,k,t)$ is defined as the ordered concatenation of these pieces of orbits of $X_1$ and $X_2$ and the jumps between then. More precisely, let $\Gamma(q,1,t)=\gamma_1(t)$ for $0\leqslant t\leqslant t_1$, $\Gamma(q,2,t)=\gamma_2(t)$ for $0\leqslant t\leqslant t_2$ and so on (observe that we may have $t_i=0$) and define $\Gamma(q,k,t)$ as the ordered concatenation,
	\[\Gamma(q,k,t)=\Gamma(q,1,t)\cup\Gamma(q,2,t)\cup\dots\cup\Gamma(q,j,t)\cup\dots.\]
Suppose now $q\in\Sigma$. In this case $X$ have two global orbits $\Gamma_1(q,k,t)$, $\Gamma_2(q,k,t)$ with initial condition $q$. The orbit $\Gamma_1$ works as follows. Let $\gamma_1(t)$ be the orbit of $X_1$ with initial condition $q$. Suppose first that there is $t_1<0$ such that $\gamma_1(t)\in A_1$ for all $t\in[t_1,0)$ and let $q_1=\gamma_1(t_1)$. Then $q$ belongs to $\Gamma(q_1,1,t)$ and thus we just follow it. That is, we let $\Gamma_1(q,1,t)=\{q_1\}$ and $\Gamma_1(q,j,t)=\Gamma(q_1,j,t)$ for $j\geqslant2$.

Similarly, suppose that there is $t_1>0$ such that $\gamma_1(t)\in A_1$ for all $t\in(0,t_1]$ and let $q_1=\gamma_1(t_1)$. In this case we concatenate the piece of $\gamma_1$ from $q$ to $q_1$ with $\Gamma(q_1,k,t)$. That is, we let $\Gamma_1(q,1,t)=\gamma_1(t)$ for $t\in[0,t_1]$, $\Gamma_1(q,1,t)=\Gamma(q_1,1,t-t_1)$ for $t\geqslant t_1$ and $\Gamma_1(q,j,t)=\Gamma(q_1,j,t)$ for $j\geqslant2$. Similarly, one can define $\Gamma_2(q,k,t)$. 

Throughout the paper, given $q\in\mathbb{R}^2$ we may refer to the orbits $\Gamma_1(q,k,t)$ and $\Gamma_2(q,k,t)$ of $X$ with initial condition $q$. However, if $q\not\in\Sigma$, then the orbit is unique and thus $\Gamma_1=\Gamma_2$.

\section{Statement of the main results}\label{Sec2}

Let $X\in\mathfrak{X}_n$, $n\geqslant1$. As usual, to study periodic orbits and limit cycles we define a \emph{first return map} $P\colon\Sigma\to\Sigma$ (also known as the \emph{Poincar\'e map}). As we shall see in Section~\ref{Sec4}, $P$ is piecewise continuous. The reason for this is that given $y\equiv(0,y)\in\Sigma$, its first return $P(y)\in\Sigma$ can be given in four different ways, depending on the orientation of the pieces of orbits $\gamma_i$ of $X_i$, $i\in\{1,2\}$. See Figure~\ref{Fig9}.
\begin{figure}[h]
	\begin{center}
		\begin{minipage}{5cm}
			\begin{center}
				\begin{overpic}[height=5cm]{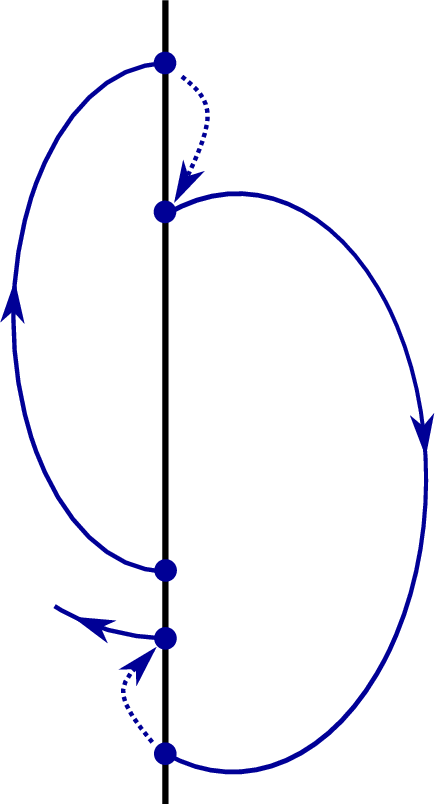} 
					\put(23,28){$y$}
					\put(23,18){$P(y)$}
					\put(27,84){$\varphi$}
					\put(9,12){$\varphi$}
					\put(14,97){$\Sigma$}
					\put(0,35){$\gamma_1$}
					\put(50,60){$\gamma_2$}
				\end{overpic}
				
				$(a)$ 
			\end{center}
		\end{minipage}
		\begin{minipage}{5cm}
			\begin{center}
				\begin{overpic}[height=5cm]{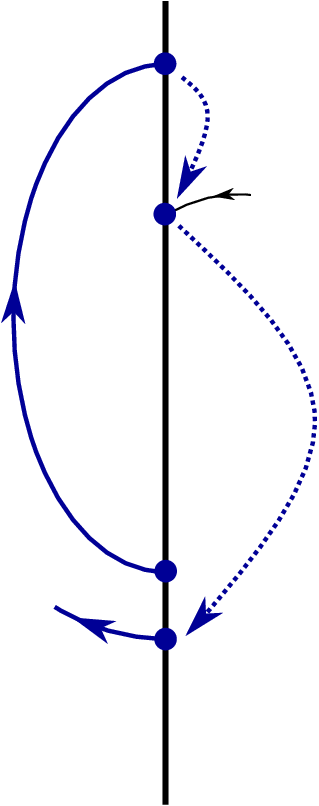} 
					\put(23,28){$y$}
					\put(22,15){$P(y)$}
					\put(27,84){$\varphi$}
					\put(40,46){$\varphi$}
					\put(14,97){$\Sigma$}
					\put(0,35){$\gamma_1$}
					\put(29,71){$\gamma_2$}
				\end{overpic}
				
				$(b)$ 
			\end{center}
		\end{minipage}
	\end{center}
$\;$
	\begin{center}
		\begin{minipage}{5cm}
			\begin{center}
				\begin{overpic}[height=5cm]{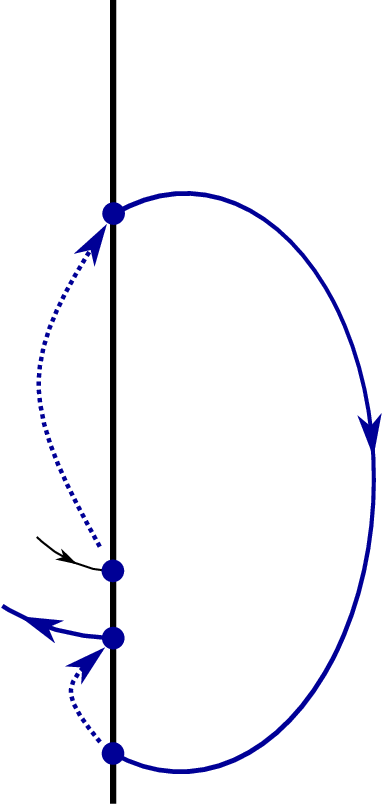} 
					\put(16,28){$y$}
					\put(16,18){$P(y)$}
					\put(-1,50){$\varphi$}
					\put(3,11){$\varphi$}
					\put(7,97){$\Sigma$}
					\put(-2,30){$\gamma_1$}
					\put(44,60){$\gamma_2$}
				\end{overpic}
				
				$(c)$ 
			\end{center}
		\end{minipage}
		\begin{minipage}{5cm}
			\begin{center}
				\begin{overpic}[height=5cm]{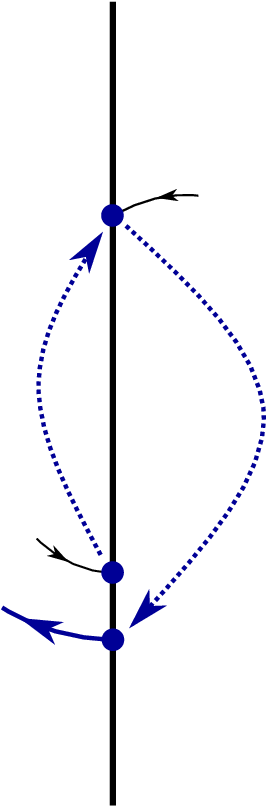} 
					\put(16,29){$y$}
					\put(15,16){$P(y)$}
					\put(-1,50){$\varphi$}
					\put(34,50){$\varphi$}
					\put(7,97){$\Sigma$}
					\put(-2,30){$\gamma_1$}
					\put(24,71){$\gamma_2$}
				\end{overpic}
				
				$(d)$ 
			\end{center}
		\end{minipage}
	\end{center}
	\caption{Illustrations of the first return of $y$.}\label{Fig9}
\end{figure}
Let $y\in\Sigma$ and consider the global orbit $\Gamma_1(y,k,t)$. We say that $\Gamma_1$ is \emph{periodic} if $P^n(y)=y$ for some $n\geqslant1$. In particular, if the orbit through $y$ is an isolated periodic point, then we say that it is a \emph{limit cycle}. Therefore, the limit cycles of $X$ are in bijection with the isolated periodic points of $P$. If the orientation of the pieces of orbits of $X_1$ and $X_2$ agree (e.g. Figure~\ref{Fig9}$(a)$), then we say that the limit cycle is \emph{regular}. In particular, as we shall see later on, the first return map is smooth on the regular limit cycles. Moreover, if $n\in\mathbb{N}$ is the minimal $n$ such that $P^n(y)=y$, then we say that $n$ is the \emph{period} of the limit cycle. See Figure~\ref{Fig3}.
\begin{figure}[h]
	\begin{center}	
		\begin{minipage}{5cm}
			\begin{center}
				\begin{overpic}[height=5cm]{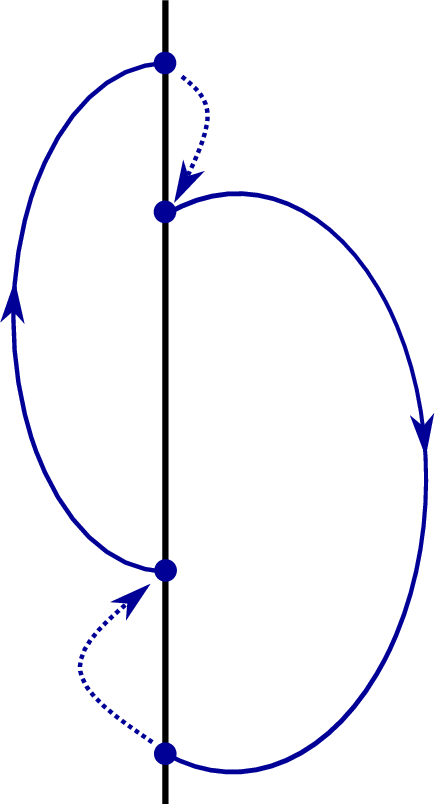} 
					\put(14,97){$\Sigma$}
					\put(23,27.5){$y=P(y)$} 
				\end{overpic}
				
				Period one.
			\end{center}
		\end{minipage}
		\begin{minipage}{5cm}
			\begin{center}
				\begin{overpic}[height=5cm]{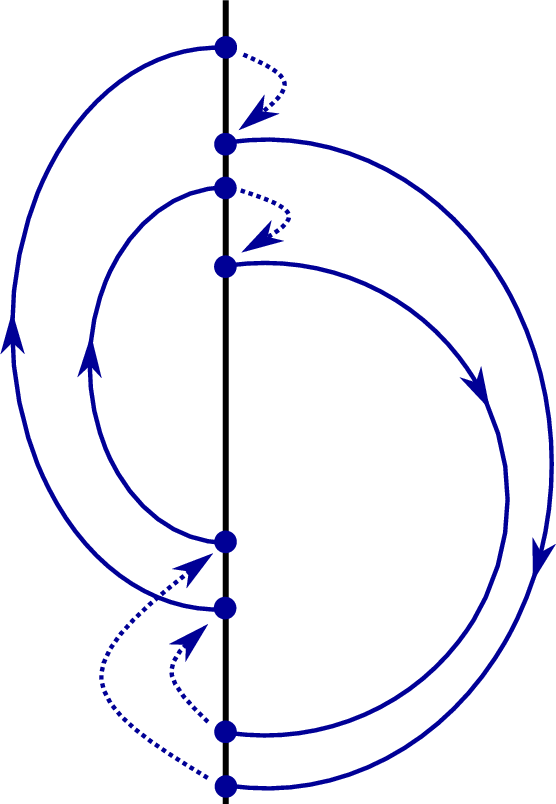} 
					\put(22,97){$\Sigma$}
					\put(31,31){$y=P^2(y)$}
					\put(30,22){$P(y)$} 
				\end{overpic}
				
				Period two.
			\end{center}
		\end{minipage}
	\end{center}
	\caption{Illustrations of regular limit cycles.}\label{Fig3}
\end{figure}
We can also define the \emph{displacement map} $d\colon\Sigma\to\Sigma$ given by $d(y)=P(y)-y$, which is also smooth on the regular limit cycles. Hence, let $\Gamma$ be a regular limit cycle of $X$ and let $y_0\in\Sigma$ be its associated zero of the displacement map. We say that $\Gamma$ is \emph{hyperbolic} if $d'(y_0)\neq0$. Moreover, in this case we say that $\Gamma$ is \emph{stable} (resp. \emph{unstable}) if $d'(y_0)<0$ (resp. $d'(y_0)>0$). In our first main result we study the regular limit cycles of $\mathfrak{X}_1$.

\begin{theorem}\label{Main1}
	Let $X=(X_1,X_2;\Sigma;\varphi)\in\mathfrak{X}_1$. If $X$ has a regular limit cycle, then $|\varphi'(0)|\neq1$. Moreover, if it exists then it is unique, hyperbolic, stable (resp. unstable) if $|\varphi'(0)|<1$ (resp. $|\varphi'(0)|>1$) and has period one.
\end{theorem}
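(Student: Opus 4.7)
The plan is to reduce the analysis to an explicit computation with a single affine first return map. Recall that any planar linear center admits a quadratic first integral (obtained, for instance, by pulling back $u^2+v^2$ through the linear change of coordinates that brings the center to normal form). Let $H_i$ denote such a first integral for $X_i$. The coefficient of $y^2$ in $H_i$ is necessarily nonzero: otherwise $\dot x$ would not depend on $y$ and $X_i$ could not be a center. Hence every level curve of $H_i$ meets $\Sigma=\{x=0\}$ in the two roots of a quadratic in $y$ whose leading coefficient does not depend on the level, so by Vieta's formulas the sum of these two roots is a fixed constant. Consequently, on its natural domain of definition, the half-return map $\pi_i\colon\Sigma\to\Sigma$ of $X_i$, which sends one intersection of an orbit with $\Sigma$ to the other, is an affine involution
\[ \pi_i(y) = -y + c_i\]
for some constant $c_i\in\mathbb{R}$.

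For a \emph{regular} limit cycle, as in Figure~\ref{Fig9}$(a)$, the orientations of $\gamma_1$ and $\gamma_2$ are consistent, so the Poincar\'e map factors as $P = \varphi\circ\pi_2\circ\varphi\circ\pi_1$. Writing $\varphi(y)=ay+b$ with $a=\varphi'(0)$, a direct substitution yields
\[ P(y) = a^2 y + C, \qquad C = b(1-a) + a(c_2 - ac_1).\]
The fixed point equation $(1-a^2)y_0=C$ has an isolated solution if and only if $a^2\neq 1$; when $a^2=1$, the map $P$ is a translation, either nontrivial (no fixed point) or the identity (a continuum of fixed points). In either degenerate case no isolated periodic point exists, so the existence of a regular limit cycle forces $|\varphi'(0)|\neq 1$, and then $y_0 = C/(1-a^2)$ is the unique candidate; this gives both the non-vanishing condition on $\varphi'(0)$ and uniqueness.

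Hyperbolicity and the stability criterion are immediate from $P'(y_0) = a^2$, since $d'(y_0) = a^2 - 1$ is nonzero with the sign dictated by the comparison of $|\varphi'(0)|$ with $1$. Finally, iterating the affine map gives $P^{n}(y) = a^{2n}y + C(1+a^2+\cdots+a^{2(n-1)})$, and because $a^{2n}\neq 1$ for every $n\geqslant 1$ (as $a^2>0$ and $a^2\neq 1$), the equation $P^{n}(y)=y$ has the same unique solution $y_0$ as $P(y)=y$. Hence every periodic point is already fixed and the limit cycle has period one.

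The main technical point is establishing the affine involution formula for $\pi_i$ uniformly, independently of where the center of $X_i$ lies relative to $\Sigma$ and of whether the orbit in $\overline{A_i}$ traces out a full ellipse or only an arc; once this is in hand, the remainder of the proof is a direct affine calculation.
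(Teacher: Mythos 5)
Your proposal is correct and follows essentially the same route as the paper: there, too, the half-return of each linear center is reduced to the affine involution $y\mapsto -y+\eta_i$ (read off from the quadratic first integral of the normal form in Lemma~\ref{Lemma1}, which is exactly your Vieta argument made explicit), and the regular return map $P(y)=a^2y+\beta_1$ is then analyzed by the same fixed-point and geometric-series computation to get uniqueness, hyperbolicity, stability and period one. The only cosmetic difference is that you phrase the involution abstractly while the paper computes it from the explicit normal form.
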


We observe that the necessary condition $|\varphi'(0)|\neq1$ for regular limit cycles agree with the fact that if $\varphi=\text{Id}_{\Sigma}$ (i.e. if $\varphi$ is the identity map $\text{Id}\colon\Sigma\to\Sigma$), then the hybrid system becomes a \emph{Filippov system} and thus it follows from Llibre and Teixeira \cite{LliTei2018} that it cannot have limit cycles.

We also observe that the authors in \cite{LiLiu2023} also worked with planar hybrid systems composed by two linear centers. More precisely, let $\mathfrak{X}^*=(X_1,X_2;\Sigma;\psi)$ be the family of planar hybrid systems such that $X_1$ and $X_2$ are linear vector fields with center singularities, $\Sigma$ is the straight line given by $x=0$ and $\psi\colon\Sigma\to\Sigma$ is given by
\[\psi(y)=\left\{\begin{array}{ll} r_+y, & \text{if } y\geqslant0, \vspace{0.2cm} \\ r_-y, & \text{if } y\leqslant0, \end{array}\right.\]
with $r_\pm\in(0,1]$. The authors in \cite{LiLiu2023} proved that if $X\in\mathfrak{X}^*$, then $X$ can have at most one limit cycle and that this upper bound is sharp (i.e. there is $X\in\mathfrak{X}^*$ with one limit cycle). Moreover, we observe that in their paper the hypothesis $r_\pm\in(0,1]$ ensures that any limit cycle is regular. 

In our second main result we prove that for almost all $X\in\mathfrak{X}_n$, $n\geqslant1$, there is a compact $K\subset\mathbb{R}^2$ such that any relevant dynamics of $X$ happens inside $K$. 

\begin{theorem}\label{Main2}
	Let $X=(X_1,X_2;\Sigma;\varphi)\in\mathfrak{X}_n$ and if $n=1$, suppose $|\varphi'(0)|\neq1$. Then there is a compact $K\subset\mathbb{R}^2$ such that if $q\in\mathbb{R}^2\backslash K$, then the following statements hold.
	\begin{enumerate}[label=(\alph*)]
		\item If $n\geqslant2$, then  $|\Gamma_i(q,j,t)|\to\infty$ as $j\to\infty$, for $i\in\{1,2\}$.
		\item If $n=1$ and $|\varphi'(0)|>1$, then  $|\Gamma_i(q,j,t)|\to\infty$ as $j\to\infty$, for $i\in\{1,2\}$.
		\item If $n=1$ and $|\varphi'(0)|<1$, then $\Gamma_i(q,j,t)\to K$ as $j\to\infty$, for $i\in\{1,2\}$.
	\end{enumerate}
\end{theorem}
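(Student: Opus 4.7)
My plan is to reduce the two-dimensional dynamics to a one-dimensional iteration on $\Sigma$, analyse its asymptotic behaviour for large amplitude, and then lift the conclusions back to $\mathbb{R}^2$. As a first step, for each linear center $X_i$ I would record two pieces of data coming from its positive definite quadratic first integral $H_i$. Its level sets are concentric ellipses, and the two intersections of a fixed level set with $\Sigma$ are the roots of a quadratic polynomial in $y$ whose quadratic and linear coefficients do not depend on the level; this forces the \emph{other-intersection} map $\sigma_i\colon\Sigma\to\Sigma$ to be affine with slope $-1$, say $\sigma_i(y)=-y+c_i$. Simultaneously, because the off-diagonal entry of the linear part of a nondegenerate center cannot vanish, the first component $(X_i)_x(0,y)$ is a nontrivial linear function of $y$, so the \emph{entry set}
\[S_i=\{y\in\Sigma:\text{the flow of }X_i\text{ enters }A_i\text{ for small }t>0\}\]
is an open half-line of $\Sigma$.

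Using the alternation rules of Section~\ref{Sec1}, the dynamics then encodes as a sequence of post-jump landing points $(q_k)\subset\Sigma$ together with a target index $i_k\in\{1,2\}$ that alternates at every step and satisfies
\[q_{k+1}=\varphi(\sigma_{i_k}(q_k))\text{ if }q_k\in S_{i_k},\qquad q_{k+1}=\varphi(q_k)\text{ otherwise.}\]
In both branches the argument of $\varphi$ agrees with $\pm q_k$ up to a bounded error, so $q_{k+1}=a_n(\pm q_k)^n+O(|q_k|^{n-1})$ as $|q_k|\to\infty$, where $a_n\neq 0$ denotes the leading coefficient of $\varphi$. Consequently $|q_{k+1}|/|q_k|^n\to|a_n|$ uniformly in $i_k$ and in the branch, and the three cases of the statement follow from a direct one-dimensional estimate. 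When $n\geqslant 2$, or when $n=1$ and $|\varphi'(0)|>1$, for $R$ large enough $|q_k|>R$ forces $|q_{k+1}|\geqslant 2|q_k|$, so $|q_k|\to\infty$. When $n=1$ and $|\varphi'(0)|<1$, the same type of estimate yields $|q_{k+1}|\leqslant\rho|q_k|$ for some fixed $\rho<1$ and all $|q_k|$ large enough, so the iterates are eventually trapped in a bounded interval of $\Sigma$.

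To transfer these conclusions back to $\mathbb{R}^2$ I would invoke once more the positive definiteness of $H_{i_k}$: the $k$-th orbit piece lies on the level curve of $H_{i_k}$ through $(0,q_k)$, and such a level curve is contained in the complement of an arbitrarily large ball whenever $|q_k|$ is large, while it is contained in a fixed compact set whenever $|q_k|$ is bounded. The compact $K$ of the statement can then be assembled as the union of a sufficiently large closed disk around the trap interval of $\Sigma$ identified above, together with the compact region swept out by those level curves of $H_1$ and $H_2$ that lie entirely inside $\overline{A_1}$ or $\overline{A_2}$; the latter correspond to purely periodic orbits that never interact with $\varphi$ and must simply be absorbed into $K$. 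The main obstacle I anticipate is the bookkeeping required to show that the large-$|y|$ estimate $|q_{k+1}|\sim|a_n||q_k|^n$ holds uniformly across the alternation between the flow and no-flow branches and between the indices $i_k$; this reduces to the observation that the boundary points of $S_1$ and $S_2$ together with any tangential configurations lie in a fixed bounded part of $\Sigma$ and hence do not affect the asymptotics.
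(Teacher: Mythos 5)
Your proposal follows essentially the same route as the paper: reduce to the piecewise-polynomial return dynamics on $\Sigma$ built from the slope $-1$ chord maps $\sigma_i(y)=-y+\eta_i$ and the jump $\varphi$, estimate growth or contraction of $|q_k|$ for $|q_k|$ large, and lift back to the plane via the level ellipses of $H_i$ (you are in fact more explicit than the paper about absorbing into $K$ the ellipses that never meet $\Sigma$). The one slip is the claim that $|q_k|>R$ forces $|q_{k+1}|\geqslant 2|q_k|$ in the case $n=1$, $|\varphi'(0)|>1$ --- this fails whenever $1<|\varphi'(0)|<2$ --- but it is harmless, since $|q_{k+1}|\geqslant\lambda|q_k|$ for any fixed $\lambda\in(1,|\varphi'(0)|)$ and $|q_k|$ large enough, which already yields $|q_k|\to\infty$.
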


Theorem~\ref{Main2} shows that outside the relevant compact $K$, every orbit either escape to the infinity (statements $(a)$ and $(b)$), or converges to $K$ (statement $(c)$). However, as we shall prove in our next main result, the dynamics inside such compact may be very rich. For an illustration of $K$, see Figure~\ref{Fig4}.

Given a set $K\subset\mathbb{R}^2$, we say that $K\subset\mathbb{R}^2$ is \emph{positive invariant} by $X$ if for every $q\in K$ we have $\Gamma_i(q,j,t)\in K$ for every $j\in\mathbb{N}$, $t\geqslant0$ and $i\in\{1,2\}$. Moreover, given $q\in\mathbb{R}^2$ and $\varepsilon>0$, let $B_\varepsilon(q)$ denote the open ball of radius $\varepsilon$ centered at $q$. 

\begin{definition}\label{Def1}
	Let $X\in\mathfrak{X}_n$, $n\geqslant 1$. We say that $X$ is chaotic if there is a positive invariant compact set $K\subset\mathbb{R}^2$ satisfying the following statements.
	\begin{enumerate}[label=(\roman*)]
		\item The periodic orbits of $X$ are dense in $K$.
		\item Given any two nonempty open sets $U_1$, $U_2\subset K$, there are $q\in U_1$, $j_0\in\mathbb{N}$ and $t_0>0$ such that $\Gamma_i(q,j_0,t_0)\in U_2$, for some $i\in\{1,2\}$.
		\item There is $\beta>0$ such that for every $q_1\in K$ and $\varepsilon>0$ there are $q_2\in B_\varepsilon(q_1)\cap K$, $j_0\in\mathbb{N}$, $t_0>0$ and $i_0\in\{1,2\}$ such that
			\[||\Gamma_{i_0}(q_1,j_0,t_0)-\Gamma_{i_0}(q_2,j_0,t_0)||>\beta,\]
		where $||\cdot||$ denotes the standard norm of $\mathbb{R}^2$.
	\end{enumerate}
\end{definition}

As far as we know, the definition of chaos in Filippov system was first introduced by Colombo and Jeffrey~\cite{ColJef}, to study the local dynamics of two-folds singularities in $\mathbb{R}^3$. Furthermore, chaos in \emph{planar} Filippov system was first introduced by Buzzi et al \cite{BuzCarEuz2016}. Both definitions resembles the notion of \emph{nondeterministic chaos}, which in turn resembles the famous definition of Devaney~\cite[Chapter $8$]{Devaney}, but without the density of periodic orbits (i.e. without statement $(i)$ in Definition~\ref{Def1}).

We have chose to bring back the notion of density of periodic orbits because the previous definitions were focused on the loss of determinism in the neighborhood of a single point (see \cite[p. $436$]{ColJef}), while we are focused on the loss of determinism in a non-local framework. Hence, we think that in our framework the density of periodic orbits is an enrichment for the notion of chaos and thus our definition resembles more to Devaney's one.

As anticipated by the above discussion, in our final main result we provide an example of a chaotic system.

\begin{theorem}\label{Main3}
	There is $X\in\mathfrak{X}_2$ chaotic.
\end{theorem}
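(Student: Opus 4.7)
My plan is to exhibit an explicit hybrid system $X \in \mathfrak{X}_2$ and reduce the verification of the three chaos conditions to the dynamics of a classical one-dimensional chaotic quadratic map on $\Sigma$. I would try both linear centers equal to the standard rotation $X_1(x,y) = X_2(x,y) = (-y,x)$, restricted to $\overline{A_1}$ and $\overline{A_2}$ respectively: because the orbits are circles centered at the origin (which lies on $\Sigma$), the half-return of each $X_i$ from $\Sigma$ to $\Sigma$ is exactly $y \mapsto -y$. For the reset I would take the quadratic $\varphi_2(y) = y^2 - 2$, which is Devaney-chaotic on $J = [-2,2]$ via the conjugacy $y = 2\cos(\pi\theta)$ with the doubling map.

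On the subinterval of $J$ where the orbit realises a cycle of type (a) of Figure~\ref{Fig9} (flow through $\overline{A_1}$, reset, flow through $\overline{A_2}$, reset), the first return map becomes
\[
P(y) \;=\; \varphi_2\bigl(-\varphi_2(-y)\bigr) \;=\; (y^2-2)^2 - 2 \;=\; (\varphi_2 \circ \varphi_2)(y),
\]
using that $\varphi_2$ is even. Thus on that subinterval $P$ is the second iterate of $\varphi_2$ and is still chaotic on $J$. For the compact set I would take $K \subset \mathbb{R}^2$ to be the union of all semicircular flow arcs and $\Sigma$-chords traced by orbits with initial condition in $J$; each arc lies in $\overline{B_2(\mathbf{0})}$, so $K$ is compact, and by construction it is stable under both the flow and the reset, hence positive invariant.

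The three Devaney conditions on $K$ are then lifted from the corresponding properties of $P$ on $J$. Density of periodic orbits: each periodic point of $P$ generates a complete periodic orbit of $X$ in $K$, and these orbits saturate a dense subset of $K \cap \Sigma$, hence are dense in $K$. Topological transitivity: a transitive $P$-orbit yields an orbit of $X$ in $K$ that enters every nonempty open subset, giving condition (ii). Sensitivity with a uniform $\beta > 0$ follows from the uniform expansion of $\varphi_2 \circ \varphi_2$ on its monotonicity branches, combined with the isometric character of the semicircular flows, which ensures that a one-dimensional separation on $\Sigma$ transfers, with at most a bounded distortion, to a two-dimensional separation of the lifted orbits in $K$.

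The main obstacle is the piecewise character of $P$: the clean formula $P = \varphi_2 \circ \varphi_2$ holds only in case (a) of Figure~\ref{Fig9}; for $y$ whose $\varphi_2$-image lies on the wrong side of $\Sigma$ one falls into cases (b)--(d), where $P$ is given by a different polynomial expression (corresponding to one or more consecutive resets before a flow segment becomes admissible). I expect the right route to be pinning down a $P$-invariant subinterval $I \subset J$ on which case (a) persists under all iterates and then taking $K$ to be the saturation of $I$; alternatively, if no such $I$ is available, one must verify that the full piecewise return map still carries a full-shift factor on an appropriate invariant Cantor subset of $\Sigma$ (possibly after adjusting the constant term of $\varphi_2$ so that the relevant branches meet the required covering condition).
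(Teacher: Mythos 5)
Your mechanism is the same as the paper's --- use linear centers whose singular point lies on $\Sigma$, so that each half-return is $y\mapsto -y$, and choose a quadratic reset so that the first return map on a suitable interval becomes the second iterate of a classical chaotic quadratic map --- but your concrete example does not close, and the obstacle you flag at the end is fatal rather than technical. With $X_1=X_2=(-y,x)$ the orbit at $(0,y)$ enters $A_1$ only for $y>0$ and enters $A_2$ only for $y<0$, so case $(a)$ of Figure~\ref{Fig9} requires $y_1\in(0,\sqrt2)$ \emph{and} $P(y_1)=(2-y_1^2)^2-2\in(0,\sqrt2)$ \emph{and} so on; already the second constraint confines $y_1$ to roughly $(0.39,0.77)$. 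Moreover $Q=\varphi_2\circ\varphi_2$ satisfies $|Q'(y)|=4y(2-y^2)\geqslant 2.8$ on that subinterval, so $Q$ strictly expands lengths there and no nondegenerate subinterval on which case $(a)$ persists can be $Q$-invariant: the case-$(a)$-persistent set is a Cantor set. Hence the ``$P$-invariant subinterval $I$'' you hope to pin down does not exist for your choice of data, and your fallback (a full-shift factor on an invariant Cantor set of the genuinely piecewise return map) is a different, substantially harder argument that you do not carry out.

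The missing idea is to make the two entry conditions compatible instead of opposite. The paper takes $X_2=-X_1$, so that the orbit of $X_2$ at $(0,y)$ enters $A_2$ precisely when $y>0$ (the same sign condition as for $X_1$), and takes $\varphi(y)=-4y(1+y)$, which maps $[-1,0]$ onto $[0,1]$ and satisfies $\varphi(-y)=f_4(y)$ with $f_4$ the logistic map. Then for every $y_1\in[0,1]$ the cycle $y_1\mapsto -y_1\mapsto f_4(y_1)\mapsto -f_4(y_1)\mapsto f_4^2(y_1)$ remains in case $(a)$, the interval $[0,1]$ is $P$-invariant with $P=f_4^2$ on all of it, and chaos of $f_4^2$ (Proposition~\ref{P5}, via conjugacy with $T^2$) lifts to the closed unit disk. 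Your argument would go through after an analogous sign and range adjustment; as written, the example exhibited is not shown to be chaotic.
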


As the reader shall see, we observe that our example of chaos does not rely on \emph{sliding dynamics} through the switching manifold.

For more details about chaos in Filippov systems, we refer to the works of Carvalho and coauthors \cites{BuzCarEuz2016,Car2024,Car2024x2,CarEuz2020}. For a example of a chaotic hybrid dynamical system of dimension one, we refer to Corron et al~\cite{Corron}.

The paper is organized as follows. In Section~\ref{Sec3} we have some preliminaries results about normal forms and chaos. The main theorems are proved in Sections~\ref{Sec4}, \ref{Sec5} and \ref{Sec6}. In Section~\ref{Sec7} we dive in more details about this piecewise structure of the first return map. Finally, at Section~\ref{Sec8} we have a conclusion and some further thoughts.

\section{Preliminaries}\label{Sec3}

\subsection{Normal form}\label{Sec3.1}

In this section we recall the normal form of a linear center and its first integral, due to Llibre and Teixeira~\cite{LliTei2018}. More precisely, the normal form follows from \cite[Lemma~$1$]{LliTei2018}, while the first integral follows from simple calculations.

\begin{lemma}\label{Lemma1}
	Let $X=(P,Q)$ be a planar linear vector field with a center singularity. Then $X$ can be written as
	\begin{equation}\label{1}
		P(x,y)=-bx-\delta(4b^2+\omega^2)y+d, \quad Q(x,y)=\delta x+by+c,
	\end{equation}
	with $\omega\neq0$ and $\delta\in\{-1,1\}$. Moreover, \eqref{1} has a quadratic first integral given by,
	\begin{equation}\label{2}
		H(x,y)=2cx-2dy+\delta x^2+2bxy+\delta(4b^2+\omega^2)y^2.
	\end{equation}
\end{lemma}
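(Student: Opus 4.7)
The lemma has two independent parts — the normal-form assertion and the first-integral identity — and my plan is to treat them in sequence.

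For the first integral, the verification is a direct computation. Since $H$ is a quadratic polynomial in $(x,y)$ and the components of $X$ are affine, the Lie derivative $X(H)=P H_x + Q H_y$ is a polynomial of degree at most two. I would compute $H_x = 2c+2\delta x+2by$ and $H_y = -2d+2bx+2\delta(4b^2+\omega^2)y$, expand the two products $H_xP$ and $H_yQ$, and collect coefficients of each of the six monomials $1,x,y,x^2,xy,y^2$. The statement then reduces to checking that each such coefficient vanishes identically in the parameters $b,\omega,c,d$. The degree-two cancellations use only $\delta^2=1$: the $x^2$-coefficient is $-2\delta b$ from $H_xP$ and $+2\delta b$ from $H_yQ$; the $y^2$-coefficient is $-2\delta b(4b^2+\omega^2)$ against $+2\delta b(4b^2+\omega^2)$; and the $xy$-coefficient collects $-2b^2-2\delta^2(4b^2+\omega^2)$ from $H_xP$ against $+2b^2+2\delta^2(4b^2+\omega^2)$ from $H_yQ$. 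The affine and constant terms cancel through symmetric cross-pairings such as $\pm 2cd$, $\pm 2bcx$, $\pm 2\delta dx$, $\pm 2\delta c(4b^2+\omega^2)y$ and their $y$-analogues.

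For the normal form, I would start with a general planar linear vector field $\dot x = a_{11}x+a_{12}y+c_1$, $\dot y = a_{21}x+a_{22}y+c_2$ whose singular point is a center. This forces the linear part to have vanishing trace and positive determinant, i.e.\ $a_{22}=-a_{11}$ and $a_{12}a_{21}<-a_{11}^2$. Setting $b:=-a_{11}$ already puts the diagonal entries in the required form $-b$ and $b$. Since $a_{12}a_{21}<0$ in particular gives $a_{21}\ne 0$, I rescale time by $1/|a_{21}|$ — an operation that preserves orbits, hence the hybrid system's global orbits $\Gamma_i$, its reset map, and its first-return map — so that $a_{21}$ becomes $\delta:=\operatorname{sign}(a_{21})\in\{-1,1\}$. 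The positive-determinant condition now reads $-\delta a_{12}>b^2$, which lets one define $\omega\ne 0$ by $a_{12}=-\delta(4b^2+\omega^2)$. Finally $c:=c_2$ and $d:=c_1$ are read off from the constant terms, producing \eqref{1}.

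The main obstacle, as usual in normal-form proofs, is bookkeeping rather than depth: the first-integral identity produces six polynomial identities, each involving up to a dozen monomials across the two products, and one must track the factor $\delta$ and the coefficient $-\delta(4b^2+\omega^2)$ through every cancellation. The reduction to the normal form is essentially a reparametrization, and its only conceptual input is the legitimacy of time rescaling in the hybrid framework — which is clear because the Poincar\'e map, the flows $\Gamma_i$, and the reset $\varphi_n$ all depend only on oriented orbit structure, not on the parametrization of orbits in time.
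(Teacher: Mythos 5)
Your verification of the first integral is correct, and it is exactly what the paper means when it says the first integral ``follows from simple calculations'': the Lie derivative $PH_x+QH_y$ vanishes identically, with precisely the cancellations you list. The normal-form half, however, has a genuine gap at its final step. After your time rescaling the linear part is $\left(\begin{smallmatrix}-b & a_{12}\\ \delta & b\end{smallmatrix}\right)$ and the positive-determinant condition reads $-\delta a_{12}>b^2$; but to write $a_{12}=-\delta(4b^2+\omega^2)$ with $\omega\neq0$ you need the strictly stronger inequality $-\delta a_{12}>4b^2$, which does not follow. Concretely, the system $\dot x=-x-2y$, $\dot y=x+y$ is a linear center (trace $0$, determinant $1$) already in your post-rescaling shape with $b=1$, $\delta=1$, $a_{12}=-2$, and your recipe would require $\omega^2=-\delta a_{12}-4b^2=-2<0$. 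Moreover no time rescaling can repair this example: form \eqref{1} forces the lower-left entry to be $\pm1$, so the only admissible rescalings of this field are by $\pm1$, and both fail. Hence the normal form \eqref{1} is genuinely \emph{not} attainable by reparametrization of time alone, and your closing claim that ``the only conceptual input is the legitimacy of time rescaling'' is precisely the point at which the argument breaks.

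This is also where your route diverges from the paper's. The paper does not derive the normal form from scratch: it cites Lemma 1 of Llibre--Teixeira \cite{LliTei2018}, whose statement explicitly allows \emph{a linear change of variables} in addition to a rescaling of time, and the paper only adds the observation that the residual coefficient $a$ in that lemma can be normalized to $\delta\in\{-1,1\}$ by a further rescaling. Your proof can be repaired in the same spirit: since $a_{12}a_{21}<-a_{11}^2\leqslant0$ forces $a_{12}\neq0$, apply first the shear $(x,y)\mapsto\bigl(x,\,y-(b/a_{12})x\bigr)$, which fixes the line $\Sigma=\{x=0\}$, preserves the trace-zero form, and annihilates the diagonal entries; the determinant is invariant under this conjugation, so after your rescaling of time one lands in form \eqref{1} with $b=0$, $\delta$ the sign of the new lower-left entry, and $\omega^2$ equal to the determinant divided by the square of that entry, which is positive. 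With that one extra ($\Sigma$-preserving) change of variables inserted before your last step, the rest of your argument, including the bookkeeping for the first integral, is sound.
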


We observe that in its original state Lemma~\ref{Lemma1} in Llibre and Teixeira~\cite{LliTei2018}, instead of $\delta\in\{-1,1\}$, they had an extra coefficient $a$. But resclaing the time by $a$ or $-a$ we obtain Lemma~\ref{Lemma1}. Given $X=(X_1,X_2;\Sigma;\varphi_n)\in\mathfrak{X}_n$, $n\geqslant1$, we can suppose that $X_1$ and $X_2$ are written as in \eqref{1}, simultaneously. That is, we can suppose that $X_i=(P_i,Q_i)$ is given by
\begin{equation}\label{19}
	P_i(x,y)=-b_ix-\delta_i(4b_i^2+\omega_i^2)y+d_i, \quad Q_i(x,y)=\delta_i x+b_iy+c_i,
\end{equation}
with $\omega_i\neq0$ and $\delta_i\in\{-1,1\}$, $i\in\{1,2\}$. Hence, it follows from \eqref{2} that $X_i$ has a first integral given by
\begin{equation}\label{20}
	H_i(x,y)=2c_ix-2d_iy+\delta x^2+2b_ixy+\delta_i(4b_i^2+\omega_i^2)y^2,
\end{equation}
for $i\in\{1,2\}$.

\subsection{Chaos}\label{Sec3.2}

Let $f\colon I\to I$ be a map with $I\subset\mathbb{R}$ a compact interval (i.e. a closed interval of finite length). Following Devaney \cite[Chapter $8$]{Devaney} we say that $f$ is \emph{chaotic} if the following statements hold.
\begin{enumerate}[label=(\alph*)]
	\item The set of periodic points of $f$ is dense in $I$;
	\item $f$ is transitive on $I$; that is, given any two non-empty subintervals $U_1$ and $U_2$ of $I$, there is $x\in U_1$ and $n>0$ such that $f^n(x)\in U_2$.
\end{enumerate}

\begin{remark}\label{Remark1}
	The original definition of chaos, due to Devaney in the first edition of his book \cite{Devaney}, also included the following condition.	
	\begin{enumerate}[label=(\alph*)]
		\addtocounter{enumi}{2}
		\item $f$ has sensitive dependence on $I$; that is, there is a \emph{sensitivity constant} $\beta>0$ such that for any $x_0\in I$ and any neighborhood $U\subset I$ of $x_0$, there is $y_0\in U$ and $n>0$ such that $|f^n(x_0)-f^n(y_0)|>\beta$.
	\end{enumerate} 
	However, Banks et al \cite{Banks} proved that this third condition is redundant. If $(a)$ and $(b)$ hold, then $(c)$ holds. Moreover, consider the following condition.
	\begin{enumerate}
		\item[$(b')$] $f$ has a dense orbit. That is, there is $x\in I$ such that $\{f^n(x)\colon n\geqslant0\}$ is dense in $I$.
	\end{enumerate} 
	We observe that if $(b')$ holds, then $(b)$ holds. In fact, they are equivalent. See \cite[Lemma $4.3.4$]{ViaOli2016}.
\end{remark}

A common example of chaotic map is the \emph{logistic map} $f_\lambda\colon[0,1]\to[0,1]$ given by $f_\lambda(x)=\lambda x(1-x)$, with $\lambda=4$.

\begin{proposition}\label{P1}
	The logistic map $f_4\colon[0,1]\to[0,1]$ is chaotic.
\end{proposition}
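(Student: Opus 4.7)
The plan is to reduce Proposition~\ref{P1} to the analogous statement for the \emph{tent map} $T\colon[0,1]\to[0,1]$, defined by $T(x)=2x$ for $x\in[0,1/2]$ and $T(x)=2-2x$ for $x\in[1/2,1]$, via the classical conjugacy $h\colon[0,1]\to[0,1]$, $h(x)=\sin^2(\pi x/2)$. The map $h$ is a homeomorphism (strictly increasing from $h(0)=0$ to $h(1)=1$), and a direct trigonometric computation gives
\[f_4(h(x))=4\sin^2(\pi x/2)\cos^2(\pi x/2)=\sin^2(\pi x)=h(T(x)),\]
so that $f_4\circ h=h\circ T$ on $[0,1]$. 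Since density of periodic points and transitivity are preserved by topological conjugacy, it suffices to prove that $T$ satisfies conditions $(a)$ and $(b)$ from Section~\ref{Sec3.2}.

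For this, I would establish by induction on $n$ that $T^n$ is piecewise linear, with $2^n$ monotone branches defined on the dyadic subintervals $I_{n,k}=[k2^{-n},(k+1)2^{-n}]$, $k=0,\dots,2^n-1$, each mapping $I_{n,k}$ affinely onto $[0,1]$ with slope $\pm 2^n$. Two consequences follow immediately. First, on every $I_{n,k}$ the graph of $T^n$ is a straight segment connecting the horizontal lines $y=0$ and $y=1$, so it must cross the diagonal $y=x$ at an interior point; this produces a periodic point of $T$ of period dividing $n$ in every interval of length $2^{-n}$, and letting $n\to\infty$ yields $(a)$. Second, any non-empty open subinterval $U\subset[0,1]$ contains some $I_{n,k}$ for $n$ sufficiently large, whence $T^n(U)\supset T^n(I_{n,k})=[0,1]$; in particular $T^n(U)$ meets any prescribed non-empty open $V\subset[0,1]$, proving $(b)$.

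Transferring these two properties through the homeomorphism $h$ shows that $f_4$ itself satisfies $(a)$ and $(b)$ from Section~\ref{Sec3.2}, and by Remark~\ref{Remark1} the sensitivity condition $(c)$ is then automatic; hence $f_4$ is chaotic. The main (and essentially only) obstacle in executing this plan is the combinatorial description of the branches of $T^n$ on the dyadic partition, but this follows directly by induction from the identity $T^{n+1}=T\circ T^n$: each branch of $T^n$ is subdivided into two pieces according to whether $T^n$ takes values in $[0,1/2]$ or $[1/2,1]$, each of which is postcomposed with an affine surjection of $T$ onto $[0,1]$.
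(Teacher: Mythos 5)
Your argument is correct, but it runs in the opposite direction from the paper. The paper does not prove Proposition~\ref{P1} at all: it simply cites Devaney \cite[p.~$63$]{Devaney} for the chaoticity of $f_4$, and then in Proposition~\ref{P3} uses the conjugacy $h(x)=\sin^2(\pi x/2)$ (the same one you use) to \emph{deduce} that the tent map $T$ is chaotic from the already-known chaoticity of $f_4$. You instead make the result self-contained: you prove directly that $T$ is chaotic via the standard dyadic-interval analysis --- $T^n$ has $2^n$ affine branches of slope $\pm2^n$ on the intervals $I_{n,k}=[k2^{-n},(k+1)2^{-n}]$, each mapping onto $[0,1]$, which gives a periodic point in every $I_{n,k}$ (hence density) and $T^n(U)=[0,1]$ for any open $U$ containing some $I_{n,k}$ (hence transitivity) --- and then transport these properties to $f_4$ through $h$, exactly as licensed by Proposition~\ref{P2}. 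What your route buys is a complete proof not resting on an external reference; what the paper's route buys is brevity, since it only needs the conjugacy computation once and leans on the literature for the hard part. One cosmetic point: your claim that the graph of $T^n$ crosses the diagonal at an \emph{interior} point of $I_{n,k}$ fails for the increasing branch on $I_{n,0}$ (the crossing is at $x=0$) and on $I_{n,2^n-1}$ (at $x=1$); this is harmless, since a periodic point in every $I_{n,k}$, interior or not, is all that density requires, but you should phrase it as a sign change of $T^n(x)-x$ on the closed interval rather than an interior crossing.
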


For a proof of Proposition~\ref{P1}, see \cite[p. $63$]{Devaney}. Consider two maps $f\colon I\to I$ and $g\colon J\to J$, with $I$ and $J$ compact intervals of $\mathbb{R}$. We say that $f$ and $g$ are \emph{conjugated} if there is a homeomorphism $h\colon I\to J$ such that $h\circ f=g\circ h$.

\begin{proposition}\label{P2}
	Let $f\colon I\to I$ and $g\colon J\to J$ two conjugated maps, with $I$ and $J$ compact intervals. If $f$ is chaotic, then $g$ is chaotic.
\end{proposition}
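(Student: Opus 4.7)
The plan is to leverage the conjugacy relation $h \circ f = g \circ h$ (equivalently $g = h \circ f \circ h^{-1}$) to transport each of the two defining properties of chaos from $f$ to $g$. A routine induction gives $g^n = h \circ f^n \circ h^{-1}$ for every $n \geqslant 0$, and this identity is the workhorse for both conditions.

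First I would handle density of periodic points. Let $\mathrm{Per}(f) \subset I$ denote the set of periodic points of $f$, which is dense in $I$ by assumption. The key observation is that $h$ sends $\mathrm{Per}(f)$ bijectively onto $\mathrm{Per}(g)$: if $f^n(x) = x$ then $g^n(h(x)) = h(f^n(x)) = h(x)$, and the reverse implication follows by applying $h^{-1}$ and using the conjugacy in the form $f = h^{-1} \circ g \circ h$. Since $h$ is a homeomorphism, it maps dense sets to dense sets, so $\mathrm{Per}(g) = h(\mathrm{Per}(f))$ is dense in $J = h(I)$.

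Next I would verify transitivity of $g$. Given two nonempty open subintervals $V_1, V_2 \subseteq J$, set $U_i = h^{-1}(V_i)$ for $i \in \{1,2\}$. Because $h$ is a homeomorphism, the $U_i$ are nonempty open subsets of $I$, so by transitivity of $f$ there exist $x \in U_1$ and $n > 0$ with $f^n(x) \in U_2$. Setting $y = h(x) \in V_1$, the conjugacy yields
\[
g^n(y) = g^n(h(x)) = h(f^n(x)) \in h(U_2) = V_2,
\]
which is precisely transitivity of $g$.

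There is no real obstacle here: the entire argument rests on the algebraic identity $g^n = h \circ f^n \circ h^{-1}$ together with the fact that a homeomorphism between compact intervals preserves both openness and density. In particular I do not need to verify sensitive dependence, since Remark~\ref{Remark1} cites the result of Banks et al.\ that $(a)$ and $(b)$ suffice. (If one wanted to check it directly for completeness, one would use uniform continuity of $h$ and $h^{-1}$ on the compact interval $I$ to transfer a sensitivity constant for $f$ to a sensitivity constant for $g$, but this is not required by the statement.)
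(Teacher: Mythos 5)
Your proof is correct: the identity $g^n=h\circ f^n\circ h^{-1}$ transports both density of periodic points and transitivity exactly as you describe (and, as you note, sensitivity is not needed by the definition adopted in Section~\ref{Sec3.2}). The paper does not give its own argument for Proposition~\ref{P2} but simply cites Hirsch, Smale and Devaney \cite{HirSmaDev2004}; your proof is the standard one found there, so there is nothing further to compare.
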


For a proof of Proposition~\ref{P2}, we refer to Hirsch et al \cite[p. $340$]{HirSmaDev2004}. Another well-known example of chaotic map is the \emph{tent map} $T\colon[0,1]\to[0,1]$ given by
	\[T(x)=\left\{\begin{array}{ll} 2x, & \text{if } 0\leqslant x\leqslant\frac{1}{2}, \vspace{0.2cm} \\ 2-2x, & \text{if } \frac{1}{2}\leqslant x\leqslant 1. \end{array}\right.\]

\begin{proposition}\label{P3}
	The tent map $T$ and the logistic map $f_4$ are conjugated. In particular, $T$ is chaotic.
\end{proposition}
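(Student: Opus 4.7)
The plan is to exhibit an explicit homeomorphism $h\colon[0,1]\to[0,1]$ that conjugates $T$ with $f_4$, and then invoke Propositions~\ref{P1} and \ref{P2} to transfer chaoticity from $f_4$ to $T$. The natural candidate, suggested by the half-angle structure of the logistic map, is
\[
h(x)=\sin^2\!\left(\frac{\pi x}{2}\right)=\frac{1-\cos(\pi x)}{2}.
\]

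The first step is to check that $h$ is indeed a homeomorphism of $[0,1]$: it is continuous, strictly increasing on $[0,1]$ with $h(0)=0$ and $h(1)=1$, so it is a monotone continuous bijection of the compact interval onto itself, which yields a homeomorphism. The second step is to verify the conjugacy identity $h\circ T=f_4\circ h$ by direct computation. Using the double-angle identity,
\[
f_4(h(x))=4\sin^2\!\left(\tfrac{\pi x}{2}\right)\cos^2\!\left(\tfrac{\pi x}{2}\right)=\sin^2(\pi x),
\]
while on the other side one argues branchwise: for $x\in[0,1/2]$, $T(x)=2x$, so $h(T(x))=\sin^2(\pi x)$; for $x\in[1/2,1]$, $T(x)=2-2x$, so $h(T(x))=\sin^2(\pi-\pi x)=\sin^2(\pi x)$. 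Both branches agree, which establishes the conjugacy.

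The final step is conceptual: since $T$ and $f_4$ are conjugated via $h$, and $f_4$ is chaotic by Proposition~\ref{P1}, Proposition~\ref{P2} implies that $T$ is chaotic as well, concluding the proof.

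The only genuine obstacle is guessing the correct conjugacy $h$; once the map $h(x)=\sin^2(\pi x/2)$ is written down, every remaining step reduces to a routine trigonometric identity and a branchwise case check, so I do not anticipate any substantive difficulty beyond this initial observation.
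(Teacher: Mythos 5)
Your proposal is correct and follows essentially the same route as the paper: the identical conjugacy $h(x)=\sin^2(\pi x/2)$, the same branchwise trigonometric verification of $h\circ T=f_4\circ h$, and the same appeal to Propositions~\ref{P1} and \ref{P2}. The only addition is your explicit check that $h$ is a homeomorphism, which the paper merely asserts.
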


\begin{proof} Let $h\colon[0,1]\to[0,1]$ be given by
	\[h(x)=\sin^2\left(\frac{\pi}{2}x\right),\]
and observe that $h$ is a homeomorphism. We claim that $h\circ T=f\circ h$, i.e. $T$ and $f_4$ are conjugated. Indeed, if $0\leqslant x\leqslant\frac{1}{2}$ then,
	\[h(T(x))=h(2x)=\sin^2(\pi x)=4\sin^2\left(\frac{\pi}{2}x\right)\cos^2\left(\frac{\pi}{2}x\right)=4h(x)(1-h(x))=f_4(h(x)).\]
Similarly, if $\frac{1}{2}\leqslant x\leqslant1$ then,
	\[h(T(x))=h(2(1-x))=\sin^2(\pi(1-x))=\sin^2(\pi x)=f_4(h(x)).\]
Therefore $f_4$ and $T$ are conjugated and thus it follows from Propositions~\ref{P1} and \ref{P2} that $T$ is chaotic. \end{proof}

Given $x\in[0,1]$, let $[x]_2=0.s_1s_2s_3\dots$ be an expression of $x$ in binary (in particular, observe that if $x=1$, then $s_i=1$ for every $i\geqslant1$). Note that the expression of $T(x)$ in binary is given by
	\[[T(x)]_2=\left\{\begin{array}{ll} 0.s_2s_3\dots, & \text{if } s_1=0, \vspace{0.2cm} \\ 0.s_2^*s_3^*\dots, & \text{if } s_1=1, \end{array}\right.\]
where $s_i^*=1-s_i$. 

\begin{proposition}\label{P4}
	The map $T^2\colon[0,1]\to[0,1]$ is chaotic.
\end{proposition}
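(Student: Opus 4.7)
My plan is to verify the two defining conditions of Devaney chaos, namely density of periodic points and transitivity, for $T^2$ on $[0,1]$.

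For the density of periodic points of $T^2$, I would inherit the property directly from $T$. If $x$ is periodic for $T$ with period $n$, then $(T^2)^n(x)=T^{2n}(x)=x$, so $x$ is periodic for $T^2$ as well. By Proposition~\ref{P3}, $T$ is chaotic, so its periodic points are dense in $[0,1]$, and hence so are the periodic points of $T^2$.

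For transitivity, I would prove the stronger statement that for every nonempty open subinterval $U\subset[0,1]$ there exists $m\geqslant0$ with $(T^2)^m(U)=[0,1]$, from which transitivity is immediate (given $U_1,U_2$, apply the property to $U_1$). This relies on the piecewise structure of $T^2$: a direct computation shows that $T^2$ has four affine branches of slope $\pm 4$ on the subintervals $[k/4,(k+1)/4]$, $k\in\{0,1,2,3\}$, each mapping onto $[0,1]$. Hence, whenever $U$ contains some $[k/4,(k+1)/4]$ we have $T^2(U)=[0,1]$ at once. Otherwise $U$ lies in the union of two consecutive branches, and I would verify the expansion estimate $|T^2(U)|\geqslant 2|U|$: if $U$ contains no kink then $T^2$ is linear on $U$ and $|T^2(U)|=4|U|$, while if $U$ contains exactly one kink $k$ in its interior then, since $T^2(k)\in\{0,1\}$, continuity forces $T^2(U)$ to be a single interval of length $\max\{4(k-\inf U),\,4(\sup U-k)\}\geqslant 2|U|$. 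Iterating this estimate forces a full branch to be contained in the image in finitely many steps, after which one further application of $T^2$ produces $[0,1]$.

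The main obstacle is controlling the image of $U$ when a kink lies inside it, since a priori the two pieces of image could overlap in complicated ways; but because the kinks of $T^2$ are mapped to the extreme values $\{0,1\}$, the two half-images necessarily extend from the same endpoint into $[0,1]$, and taking the longer of the two yields the $2|U|$ lower bound. With this lemma in hand, both defining conditions of chaos hold for $T^2$ and the proposition follows.
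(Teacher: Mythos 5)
Your proof is correct, and its first half (density of periodic points) coincides with the paper's argument: periodic points of $T$ are periodic for $T^2$, and they are dense because $T$ is chaotic. For transitivity, however, you take a genuinely different route. The paper works with binary expansions: it shows that $[F^n(x)]_2$ depends only on the digits of $x$ from position $2n$ on (up to complementation governed by $s_{2n}$), builds an explicit point $x^*$ by concatenating all even-length blocks ending in $0$, checks that its $F$-orbit is dense, and then invokes the equivalence of having a dense orbit with transitivity (Remark~\ref{Remark1}). You instead exploit the piecewise-affine structure of $T^2$ --- four branches of slope $\pm4$ on the intervals $[k/4,(k+1)/4]$, each mapping onto $[0,1]$, with every kink sent to $\{0,1\}$ --- to get the doubling estimate $|T^2(U)|\geqslant 2|U|$ for any interval $U$ not containing a full branch, and conclude that every nonempty open interval is eventually mapped onto $[0,1]$. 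Your estimate checks out: an interval missing every full branch meets at most two consecutive branches and hence contains at most one kink $k$, and since the two half-images share the extreme value $T^2(k)$ as a common endpoint, their union is a single interval of length $\max\{4(k-\inf U),\,4(\sup U-k)\}\geqslant 2|U|$; iterating, the image must eventually contain a full branch, after which one more application yields $[0,1]$. As for what each approach buys: the paper's symbolic argument produces an explicit dense orbit, which is of independent interest, while yours avoids the bookkeeping with complemented digits (the reason the paper's blocks must end in $0$) and in fact establishes the stronger locally-eventually-onto property, hence topological mixing of $T^2$, not merely transitivity.
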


\begin{proof} Let $F=T^2$. We now prove that $F$ satisfies statements $(a)$ and $(b)$ of the definition of chaos. To prove statement $(a)$, observe that if $T^k(x)=x$ for some $k\in\mathbb{N}$, then
		\[F^k(x)=T^{2k}(x)=T^k(T^k(x))=T^k(x)=x.\]
Hence, if $x$ is a periodic point of $T$, then it is also a periodic point of $F$. Since the periodic points of $T$ are dense in $[0,1]$ (because $T$ is chaotic), it follows that the periodic points of $F$ are dense in $[0,1]$ and thus we have statement $(a)$. We now prove statement $(b)$. First, observe that if $[x]_2=0.s_1s_2s_3\dots$, then
		\[[F(x)]_2=\left\{\begin{array}{ll} 0.s_3\dots, & \text{if } s_2=0, \vspace{0.2cm} \\ 0.s_3^*\dots, & \text{if } s_2=1. \end{array}\right.\]
In particular, the expression of $F(x)$ in binary does not depend on $s_1$. Similarly, it follows by induction that
\begin{equation}\label{17}
	[F^n(x)]_2=\left\{\begin{array}{ll} 0.s_{2n+1}\dots, & \text{if } s_{2n}=0, \vspace{0.2cm} \\ 0.s_{2n+1}^*\dots, & \text{if } s_{2n}=1, \end{array}\right.
\end{equation}
for every $n\geqslant1$. In particular, the expression of $F^n(x)$ in binary depend only on $s_{2n}$. We claim that $F$ has a dense orbit. Indeed, let $x^*\in[0,1]$ be given by,
	\[[x^*]_2=0.\underbrace{00\;10}_{2-\text{blocks}}|\underbrace{0000\;0010\;0100\;1000\;0110\;1010\;1100\;1110\;}_{4-\text{blocks}}|\underbrace{\dots}_{6-\text{blocks}}|\dots.\]
That is, $x^*$ is constructed by listing out all possible sequence of blocks of $0$'s and $1$'s of even length and such that the last digit of each block is $0$. Since each block end with $0$, it follows from \eqref{17} that given a block of length $2n$ we can shift it by applying $F^{2n}$. Hence, given $x\in[0,1]$ and $N\geqslant1$, it follows that there is an iteration of $x^*$ such that the first $2N+1$ digits of $[x]_2$ and $[x^*]_2$ are equal. Therefore the orbit of $x^*$ is dense in $[0,1]$ and thus it follows from Remark~\ref{Remark1} that $F$ is transitive, proving statement $(b)$. \end{proof}

Let $f\colon I\to I$ and $g\colon J\to J$ be conjugated by $h$, i.e. $h\circ f=g\circ h$. It follows by induction that $h\circ f^n=g^n\circ h$ for every $n\geqslant 1$. That is, if $f$ and $g$ are conjugated, then $f^n$ and $g^n$ are also conjugated. Therefore, it follows from Proposition~\ref{P3} that $f_4^2$ is conjugated to $T^2$ and thus it follows from Proposition~\ref{P4} that $f_4^2$ is chaotic. Hence, we have the following result.

\begin{proposition}\label{P5}
	Let $f_4\colon[0,1]\to[0,1]$ be the logistic map $f_4(x)=4x(1-x)$. Then $f_4^2$ is chaotic.
\end{proposition}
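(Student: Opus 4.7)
The plan is to exhibit an explicit $X=(X_1,X_2;\Sigma;\varphi_2)\in\mathfrak{X}_2$ whose first return map on a compact interval of $\Sigma$ coincides with the chaotic map $f_4^2$ of Proposition~\ref{P5}, and then lift the three conditions in the definition of chaos from the Poincar\'e map to the hybrid dynamics on a suitable compact invariant set. The candidate is
\[X_1(x,y)=(-y,x),\qquad X_2(x,y)=(y,-x),\qquad \varphi_2(y)=-4y-4y^2,\]
which indeed lies in $\mathfrak{X}_2$: both $X_i$ are linear centers with concentric circles around the origin as orbits, $X_1$ rotating counterclockwise on $\overline{A_1}$ and $X_2$ clockwise on $\overline{A_2}$, while $\varphi_2$ is a quadratic polynomial. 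The key algebraic identity behind the choice of $\varphi_2$ is $\varphi_2(-y)=4y(1-y)=f_4(y)$.

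First I would compute the Poincar\'e map $P$ on $I:=[0,1]\subset\Sigma$. For $y\in(0,1)$, the forward orbit of $(0,y)$ traces a half-circle in $A_1$ to $(0,-y)$, jumps to $(0,f_4(y))$, traces a half-circle in $A_2$ to $(0,-f_4(y))$, and jumps again to $(0,f_4(f_4(y)))=(0,f_4^2(y))$; at each jump one verifies that case $(i)$ of Section~\ref{Sec1} holds, so no extra resets are triggered. This yields $P(y)=f_4^2(y)$. Next, set $K=\overline{B_1(0)}$: since the flows of $X_1,X_2$ preserve the distance to the origin and $f_4([0,1])=[0,1]$, every $\Sigma$-crossing of an orbit with initial condition in $K$ lies in $[-1,1]\times\{0\}$, and every orbit arc lies on a circle of radius at most $1$; hence $K$ is positive invariant under $X$.

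The remaining work is to derive conditions $(i)$--$(iii)$ from the chaos of $P$ on $I$ given by Proposition~\ref{P5}. For density of periodic orbits, given $p\in K$ I would approximate $r=\|p\|$ by a periodic point $y_0$ of $P$ if $p\in\overline{A_1}$, and approximate $r$ by $f_4(y_0)$ for some periodic $y_0$ if $p\in\overline{A_2}$, using that $f_4$ is continuous and surjective so that $f_4^{-1}(U)$ is open and non-empty for every open $U\subset[0,1]$ and therefore contains a periodic point. For transitivity, Propositions~\ref{P1} and~\ref{P5} together with Remark~\ref{Remark1} provide $y^*\in I$ with dense $f_4$-orbit in $I$; the $X$-orbit through $(0,y^*)$ sweeps out half-circles of the dense family of radii $\{f_4^n(y^*)\}$ in both $A_1$ and $A_2$, yielding a dense orbit of $X$ in $K$, whence $(ii)$ follows. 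For sensitive dependence, exploiting that $X_1$ and $X_2$ are unit-speed rotations sharing the common half-return time $\pi$, I would choose $q_2\in B_\varepsilon(q_1)\cap K$ with the same angular coordinate as $q_1$ and radius selected via sensitive dependence of $P$ so that $|P^n(\|q_1\|)-P^n(\|q_2\|)|>\beta$; the two orbits then remain in phase, and at the common time corresponding to $n$ completed cycles they sit at $(0,P^n(\|q_1\|))$ and $(0,P^n(\|q_2\|))$, giving the required separation.

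The main obstacle is the sensitivity step, because one must ensure that two nearby orbits stay in phase so that their separation at a common time reduces to the radial separation amplified by iterates of $P$; this exploits crucially the choice of $X_1$ and $X_2$ as equal-speed rotations. A minor issue handled throughout is the countable set of starting radii in $I$ whose $f_4$-orbit eventually hits $0$ (the pre-images of $\{0,1\}$ under iterates of $f_4$), where the orbit of $X$ degenerates to the origin; being nowhere dense, these exceptional points affect none of the three chaos conditions.
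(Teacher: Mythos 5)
Your proposal does not prove the statement at hand; it proves a different result, and in doing so it assumes the very proposition it was supposed to establish. Proposition~\ref{P5} is a statement about the one-dimensional map $f_4^2\colon[0,1]\to[0,1]$: one must verify Devaney's conditions (dense periodic points and transitivity) for this specific interval map. What you construct instead is a hybrid system $X\in\mathfrak{X}_2$ whose first return map equals $f_4^2$ on $[0,1]$, and you then lift chaos from the return map to the planar dynamics --- but this is precisely the content and the proof of Theorem~\ref{Main3} in Section~\ref{Sec6}, not of Proposition~\ref{P5}. Worse, your argument explicitly invokes ``the chaos of $P$ on $I$ given by Proposition~\ref{P5}'' as an ingredient, so as a proof of Proposition~\ref{P5} it is circular: nowhere do you give any argument that $f_4^2$ has dense periodic points or a dense orbit.

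The gap is not merely formal. It is a genuinely nontrivial fact that the second iterate of a chaotic map is chaotic: transitivity is \emph{not} in general inherited by iterates (a transitive map can permute two subintervals, making the square non-transitive on each), so one cannot pass from Proposition~\ref{P1} to Proposition~\ref{P5} for free. This is exactly why the paper devotes Proposition~\ref{P4} to a direct proof that $T^2$ is chaotic, constructing an explicit dense orbit of $T^2$ through a binary-expansion argument (listing all even-length blocks ending in $0$), and then transfers the result to $f_4^2$ via the conjugacy $h(x)=\sin^2\bigl(\tfrac{\pi}{2}x\bigr)$ of Proposition~\ref{P3} together with Proposition~\ref{P2} and the observation that $h\circ f^n=g^n\circ h$. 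A correct proof of Proposition~\ref{P5} must contain an argument of this kind (or some other direct verification of transitivity of $f_4^2$); your proposal contains none.
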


\section{Proof of Theorem~\ref{Main1}}\label{Sec4}

Let $X=(X_1,X_2;\Sigma;\varphi)\in\mathfrak{X}_n$. We remind from Section~\ref{Sec3.1} that we can suppose that $X_i=(P_i,Q_i)$ is given by
\begin{equation}\label{13}
	P_i(x,y)=-b_ix-\delta_i(4b_i^2+\omega_i^2)y+d_i, \quad Q_i(x,y)=\delta_i x+b_iy+c_i,
\end{equation}
with $\omega_i\neq0$ and $\delta_i\in\{-1,1\}$, $i\in\{1,2\}$. Moreover, it also follows from Section~\ref{Sec3.1} that $X_i$ has a first integral given by
\begin{equation}\label{3}
	H_i(x,y)=2c_ix-2d_iy+\delta x^2+2b_ixy+\delta_i(4b_i^2+\omega_i^2)y^2,
\end{equation}
for $i\in\{1,2\}$. We now work with the first return map $P\colon\Sigma\to\Sigma$. Suppose first $X\in\mathfrak{X}_1$ and let $\varphi(y)=ay+b$. Let $y_1\in\Sigma$. Since the orbits of $X_1$ are ellipses, it follows that the orbit of $X_1$ through $y_1$ intersect $\Sigma$ in a point $y_2$ that satisfies, 
\begin{equation}\label{8}
	H_1(0,y_2)-H_1(0,y_1)=0.
\end{equation}
It follows from \eqref{3} that \eqref{8} is given by, 
\begin{equation}\label{9}
	(y_1-y_2)\bigl(2d_1-(4b_1^2+\omega_1^2)\delta_1(y_1+y_2)\bigr)=0.
\end{equation}
Since we are interested in the solution other than $y_2=y_1$, we conclude from \eqref{9} that,
\begin{equation}\label{4}
	y_2=-y_1+\eta_1, \quad \eta_1= \frac{2d_1\delta_1}{4b_1^2+\omega_1^2}.
\end{equation}
Observe that it is still possible to obtain $y_2=y_1$ from \eqref{4} in the particular case of $y_1=\eta_1/2$. This is the case where there is a unique ellipse of $X_1$ tangent to $\Sigma$, or the case in which $y_1$ is the center of $X_1$. Let $\gamma_1$ be the orbit of $X_1$ that contains $y_1$ and $y_2$. Suppose first that the direction of $\gamma_1$ is from $y_1$ to $y_2$, see Figure~\ref{Fig1}(a).
\begin{figure}[h]
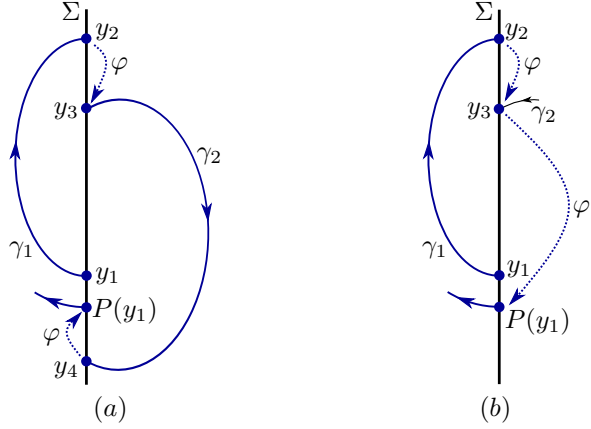

	\begin{center}
		\begin{minipage}{5cm}
			\begin{center}
				\begin{overpic}[height=5cm]{Fig1.eps} 
					\put(23,28){$y_1$}
					\put(23,93){$y_2$}
					\put(12,72){$y_3$}
					\put(12,3){$y_4$}
					\put(22,18){$P(y_1)$}
					\put(27,84){$\varphi$}
					\put(9,12){$\varphi$}
					\put(14,97){$\Sigma$}
					\put(0,35){$\gamma_1$}
					\put(50,60){$\gamma_2$}
				\end{overpic}
				
				$(a)$ 
			\end{center}
		\end{minipage}
		\begin{minipage}{5cm}
			\begin{center}
				\begin{overpic}[height=5cm]{Fig18x.eps} 
					\put(22.5,30){$y_1$}
					\put(22,93){$y_2$}
					\put(12,72){$y_3$}
					\put(22,15){$P(y_1)$}
					\put(27,84){$\varphi$}
					\put(40,46){$\varphi$}
					\put(14,97){$\Sigma$}
					\put(0,35){$\gamma_1$}
					\put(29,71){$\gamma_2$}
				\end{overpic}
				
				$(b)$ 
			\end{center}
		\end{minipage}
	\end{center}
\caption{Illustrations of the first return of $y_1$.}\label{Fig1}
\end{figure}
Then we follow $\gamma_1$ until we reach $y_2$, and after we jump to $y_3=\varphi(y_2)$. Since $\varphi(y)=ay+b$, it follows that
\begin{equation}\label{5}
	y_3=ay_2+b.
\end{equation}
Similarly let $\gamma_2$ be the orbit of $X_2$ through $y_3$ and let also $y_4\in\Sigma$ be the other point of intersection of $\gamma_2$ with $\Sigma$. Similarly to \eqref{4} we leave placed
\begin{equation}\label{6}
	y_4=-y_3+\eta_2, \quad \eta_2=\frac{2d_2\delta_2}{4b_2^2+\omega_2^2}.
\end{equation}
If the direction of $\gamma_2$ is from $y_3$ to $y_4$, then we follow $\gamma_2$ until we reach $y_4$ and then we jump to $P(y_1)=\varphi(y_4)$, i.e.
\begin{equation}\label{7}
	P(y_1)=ay_4+b.
\end{equation}
Therefore it follows from \eqref{4}, \eqref{5}, \eqref{6} and \eqref{7} that,
\begin{equation}\label{12x1}
	P(y_1)=a^2y_1+\beta_1, \quad \beta_1=b(1-a)+2a(\eta_2-a\eta_1).
\end{equation}
Now if the direction of $\gamma_2$ is from $y_4$ to $y_3$, then we cannot enter in $A_2$ and thus from $y_3$ we jump to $P(y_1)=\varphi(y_3)$. See Figure~\ref{Fig1}(b). In this case we have
\begin{equation}\label{12x2}
	P(y_1)=-a^2y_1+\beta_2, \quad \beta_2=b(1+a)+2a^2\eta_1.
\end{equation}
Similarly we have the two cases in which the direction of $\gamma_1$ does not agree. In these cases, first we jump to $y_3=\varphi(y_1)$. If we can enter in $A_2$ by following $\gamma_2$, then we follow it until we reach $y_4$ and then we jump to $P(y_1)=\varphi(y_4)$. See Figure~\ref{Fig10}(a). If not, then we jump to $P(y_1)=\varphi(y_3)$. See Figure~\ref{Fig10}(b).
\begin{figure}[h]
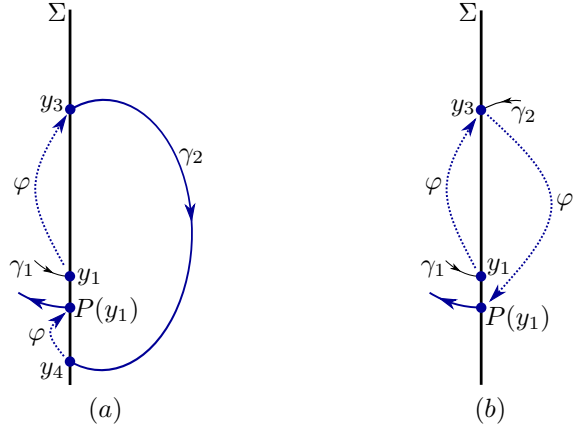

	\begin{center}
		\begin{minipage}{5cm}
			\begin{center}
				\begin{overpic}[height=5cm]{Fig19x.eps} 
					\put(16,28){$y_1$}
					\put(6,74){$y_3$}
					\put(6,3){$y_4$}
					\put(15,18){$P(y_1)$}
					\put(-1,52){$\varphi$}
					\put(3,12){$\varphi$}
					\put(8,97){$\Sigma$}
					\put(-2,30){$\gamma_1$}
					\put(43,60){$\gamma_2$}
				\end{overpic}
				
				$(a)$ 
			\end{center}
		\end{minipage}
		\begin{minipage}{5cm}
			\begin{center}
				\begin{overpic}[height=5cm]{Fig20x.eps} 
					\put(15.5,30.5){$y_1$}
					\put(6,73){$y_3$}
					\put(15,16){$P(y_1)$}
					\put(-1,52){$\varphi$}
					\put(34,48){$\varphi$}
					\put(8,97){$\Sigma$}
					\put(-2,30){$\gamma_1$}
					\put(22,71){$\gamma_2$}
				\end{overpic}
				
				$(b)$ 
			\end{center}
		\end{minipage}
	\end{center}
	\caption{Illustrations of the first return of $y_1$.}\label{Fig10}
\end{figure}
In the former we have
\begin{equation}\label{12x3}
	P(y_1)=-a^2y_1+\beta_3, \quad \beta_3=b(1-a)+2a\eta_2.
\end{equation}
While in the later we have,
\begin{equation}\label{12x4}
	P(y_1)=a^2y_1+\beta_4, \quad \beta_4=b(1+a).
\end{equation}
Hence there are $J_j\subset\mathbb{R}$ with $j\in\{1,2,3,4\}$ and $\cup_jJ_j=\mathbb{R}$, such that the first return map $P\colon\Sigma\to\Sigma$ is given by,
\begin{equation}\label{10}
	P(y)=\left\{\begin{array}{rl}
					a^2y+\beta_1 & \text{if } y\in J_1, \vspace{0.2cm} \\
					-a^2y+\beta_2 & \text{if } y\in J_2, \vspace{0.2cm} \\
					a^2y+\beta_3 & \text{if } y\in J_3, \vspace{0.2cm} \\
					-a^2y+\beta_4 & \text{if } y\in J_4. \vspace{0.2cm} \\
				\end{array}\right.
\end{equation}
We provide more details about the sets $J_j$ in Section~\ref{Sec7}. From now on in order to seek for regular limit cycles, we assume that the orientation of the orbits of $X_1$ and $X_2$ agree and thus $P(y)=a^2y+\beta_1$. It follows by induction that,
	\[P^n(y)=a^{2n}y+\beta_1\sum_{k=0}^{n-1}a^{2k}.\]
Hence we have $P^n(y)=y$ if, and only if,
	\[(a^{2n}-1)y+\beta_1\sum_{k=0}^{n-1}a^{2k}=0.\]
Observe that if $|a|=1$, then either we have no regular periodic orbits (if $\beta_1\neq0$), or every $y\in J_1$ is a regular periodic orbit (if $\beta_1=0$). Therefore if we have a regular limit cycle, then $|a|\neq1$ and
\begin{equation}\label{15}
	y=\frac{\beta_1}{1-a^{2n}}\sum_{k=0}^{n-1}a^{2k}=\frac{\beta_1}{1-a^2},
\end{equation}
where the last equality follows from the well-known property of the geometric series given by,
	\[\sum_{k=0}^{n-1}a^{2k}=\frac{1-a^{2n}}{1-a^2}.\]
However observe that the outermost right-hand side of \eqref{15} is also the unique solution of $P(y)=y$. Therefore if we have a regular limit cycle $\Gamma$, then it is the unique regular limit cycle with period one. Moreover, it follows from continuity that there is a neighborhood of $\Gamma$ such that the orientation of the piece of orbits of $X_1$ and $X_2$ agree. Hence in this neighborhood the displacement map is given by
	\[d(y)=(a^2-1)y+\beta_1,\]
and thus $\Gamma$ is hyperbolic and stable (resp. unstable) if $|a|<1$ (resp. $|a|>1$).

\section{Proof of Theorem~\ref{Main2}}\label{Sec5}

Suppose first $n=1$, $\varphi(y)=ay+b$ and $|a|\neq1$. Let $\beta=\max\{|\beta_i|\colon i\in\{1,2,3,4\}\}$, with $\beta_i$ given by \eqref{12x1}, \eqref{12x2}, \eqref{12x3} and \eqref{12x4}. It follows from \eqref{10} that
\begin{equation}\label{11}
	P^n(y)=\delta a^{2n}y+\sum_{k=0}^{n-1}a^{2k}\xi_k,
\end{equation}
with $\delta\in\{-1,1\}$ and $|\xi_k|\in\{\beta_i\colon i\in\{1,2,3,4\}\}$. We recall that given $u$, $v\in\mathbb{R}$, it is well known that,
	\[|u|-|v|\leqslant|u+v|\leqslant|u|+|v|.\]
Hence, it follows from \eqref{11} that,
\begin{equation}\label{12}
	 a^{2n}|y|-\beta\sum_{k=0}^{n-1}a^{2k}\leqslant|P^n(y)|\leqslant a^{2n}|y|+\beta\sum_{k=0}^{n-1}a^{2k}.
\end{equation}
From the outermost right-hand side of \eqref{12} we have that if $|a|<1$, then
	\[\limsup\limits_{n}|P^n(y)|\leqslant \lim\limits_{n}\left(a^{2n}|y|+\beta\sum_{k=0}^{n-1}a^{2k}\right)=\beta\lim\limits_{n}\frac{1-a^{2n}}{1-a^2}=\frac{\beta}{1-a^2}.\]
Hence, there is a compact $K\subset\mathbb{R}^2$ such that all orbits of $X$ outside $K$ converges to it. See Figure~\ref{Fig4}.
\begin{figure}[h]
	\begin{center}
		\begin{overpic}[height=7cm]{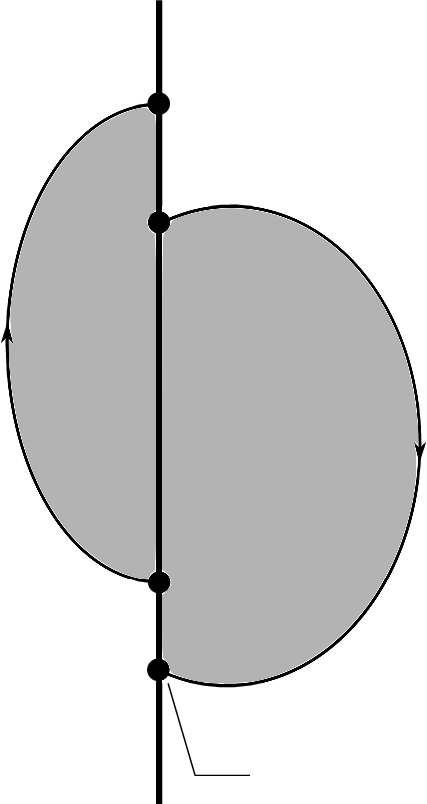} 
			\put(22,86){$y=\frac{\beta}{1-a^2}$}
			\put(32,2.5){$y=-\frac{\beta}{1-a^2}$}
			\put(25,45){$K$}
			\put(15,98){$\Sigma$}
		\end{overpic}		
	\end{center}
	\caption{Illustration of the compact $K$.}\label{Fig4}
\end{figure}
Similarly, it follows from the outermost left-hand side of \eqref{12} that if $|a|>1$, then
	\[|P^n(y)|\geqslant a^{2n}|y|-\beta\frac{a^{2n}-1}{a^2-1}=a^{2n}\left(|y|-\frac{\beta}{a^2-1}\right)+\frac{\beta}{a^2-1}.\]
Therefore if $|y|>\beta/(a^2-1)$, then $|P^n(y)|\to\infty$ as $n\to\infty$. That is, if $|a|>1$, then there is a compact $K\subset\mathbb{R}^2$ such that every orbit of $X$ outside $K$ goes to the infinity. 

We now work on the general case $X\in\mathfrak{X}_n$, $n\geqslant2$. It follows similarly from the proof of Theorem~\ref{Main1} that the first return map $P\colon\Sigma\to\Sigma$ is given by
\begin{equation}\label{14}
	P(y)=\left\{\begin{array}{ll}
		\varphi_n(\eta_2-\varphi_n(\eta_1-y)) & \text{if } y\in J_1, \vspace{0.2cm} \\
		\varphi_n(\varphi_n(\eta_1-y)) & \text{if } y\in J_2, \vspace{0.2cm} \\
		\varphi_n(\eta_2-\varphi_n(y)) & \text{if } y\in J_3, \vspace{0.2cm} \\
		\varphi_n(\varphi_n(y)) & \text{if } y\in J_4, \vspace{0.2cm} \\
	\end{array}\right.
\end{equation}	
with $\cup_j J_j=\mathbb{R}$ and $\eta_i=2d_i\delta_i/(4b_i^2+\omega_i^2)$, $i\in\{1,2\}$. In particular it follows from \eqref{14} that $P$ is piecewise polynomial. Since $\varphi_n$ is nonlinear, it follows that each branch of \eqref{14} is also nonlinear. Therefore there is $Y_0>0$ such that if $|y|>Y_0$, then $|P(y)|>|y|$, see Figure~\ref{Fig2}.
\begin{figure}[h]
	\begin{center}
		\begin{minipage}{5cm}
			\begin{center}
				\begin{overpic}[height=5cm]{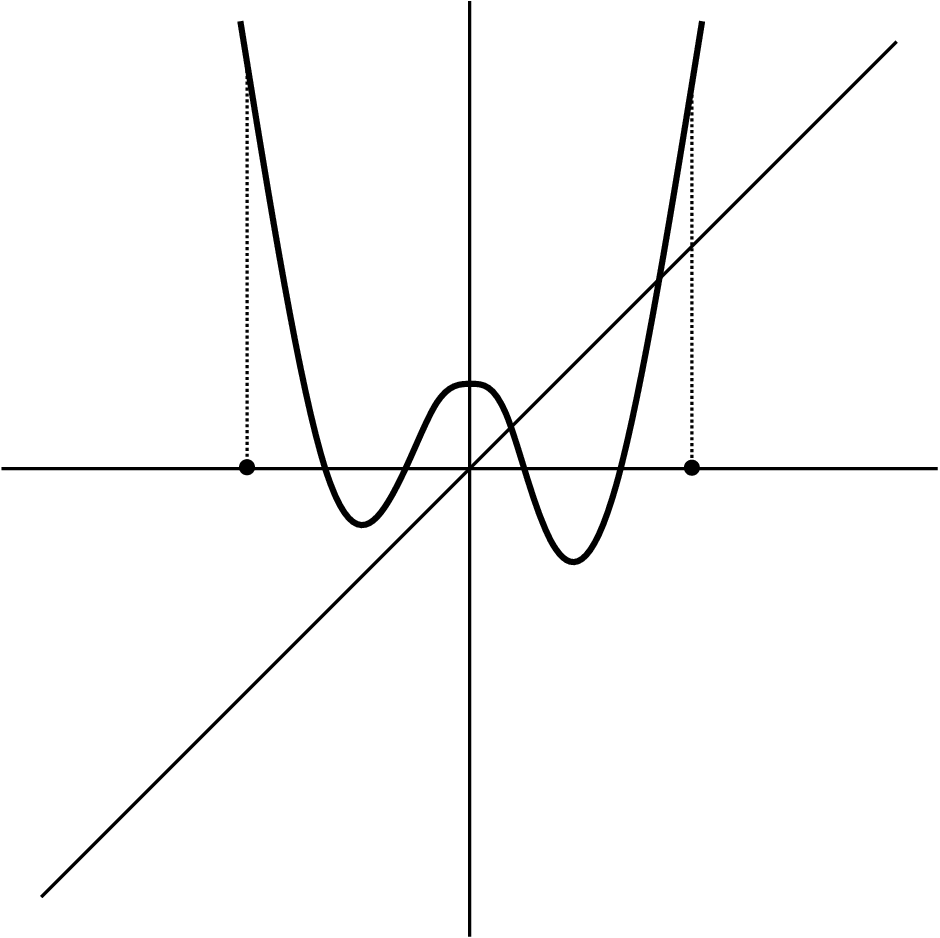} 
					\put(71,43){$Y_0$}
					\put(18,43){$-Y_0$}
				\end{overpic}
				
				Even degree.
			\end{center}
		\end{minipage}
		\begin{minipage}{7cm}
			\begin{center}
				\begin{overpic}[height=5cm]{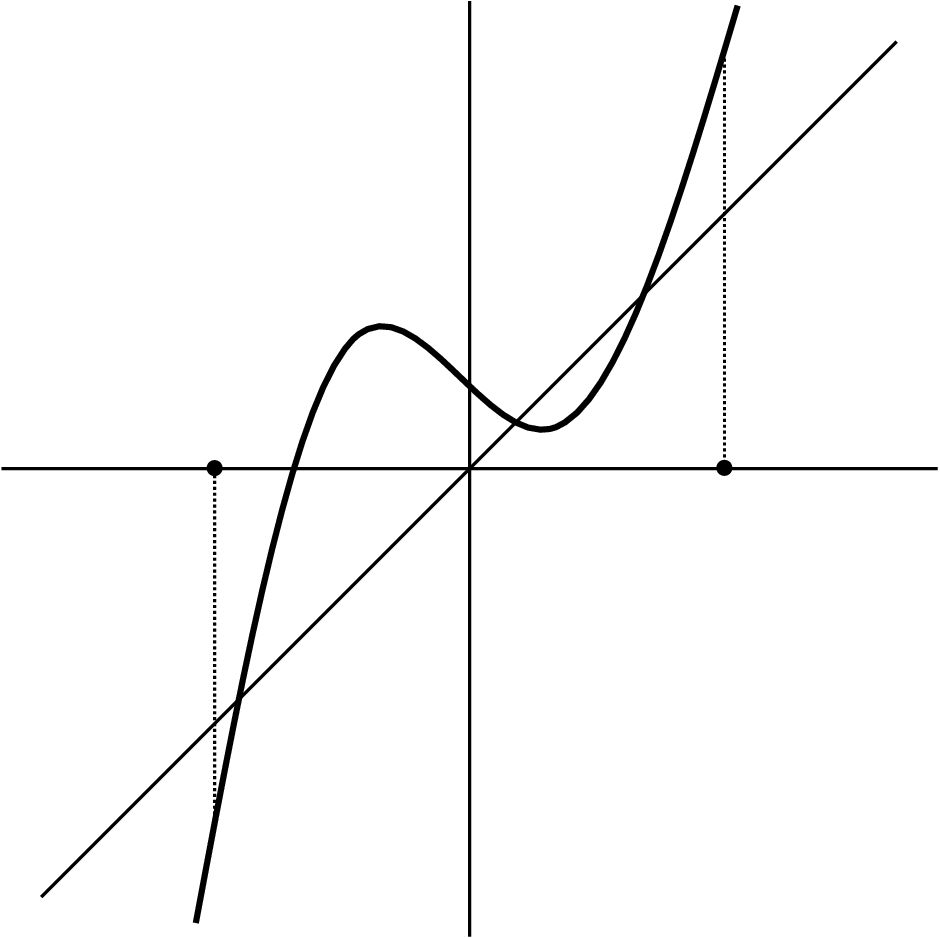} 
					\put(75,43){$Y_0$}
					\put(15,52){$-Y_0$}
				\end{overpic}
				
				Odd degree.
			\end{center}
		\end{minipage}
	\end{center}
\caption{Illustrations of the graph of nonlinear polynomials.}\label{Fig2}
\end{figure}
In particular, it follows that if $y\in\Sigma$ is such that $|y|>Y_0$, then $|P^n(y)|\to\infty$, as $n\to\infty$. Hence, there is a compact $K\subset\mathbb{R}^2$ such that all orbits of $X$ outside $K$ goes to the infinity.

\section{Proof of Theorem~\ref{Main3}}\label{Sec6}

Let $X=(X_1,X_2;\Sigma;\varphi)\in\mathfrak{X}_2$, $X_i=(P_i,Q_i)$, be given by
\begin{equation}\label{21}
	P_1(x,y)=-y, \quad Q_1(x,y)=x, \quad X_2=-X_1, \quad \varphi(y)=-4y(1+y).
\end{equation}
Consider the first return map $P\colon\Sigma\to\Sigma$ associated to $X$. We claim that if $y\in[0,1]$, then $P(y)=f_4^2(y)$, where $f_4(y)=4y(1-y)$ is the logistic map given in Section~\ref{Sec3.2}. Indeed, consider $y_1\in[0,1]$ and similarly to the proof of Theorem~\ref{Main1} observe that
\begin{equation}\label{16}
	y_2=-y_1, \quad y_3=\varphi(y_2), \quad y_4=-y_3, \quad P(y_1)=\varphi(y_4),
\end{equation}
see Figure~\ref{Fig8}.
\begin{figure}[h]
	\begin{center}
		\begin{overpic}[height=7cm]{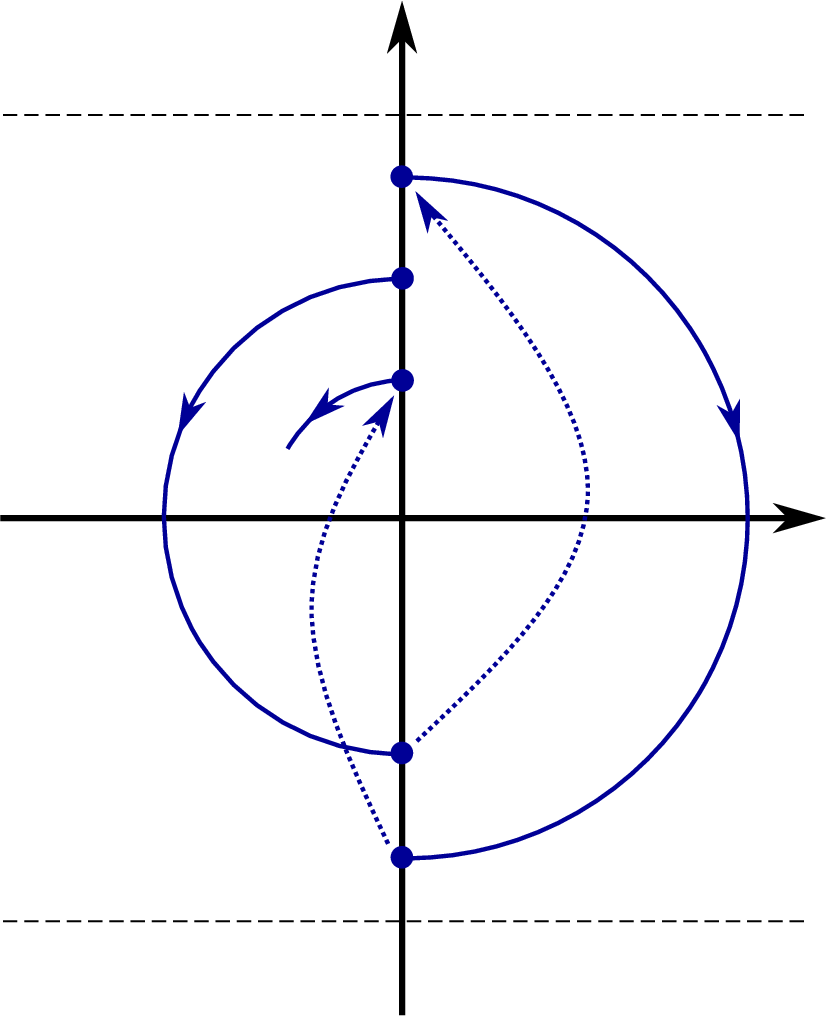} 
			\put(41,70){$y_1$}
			\put(41,24){$y_2$}	
			\put(33,81){$y_3$}	
			\put(34,13){$y_4$}	
			\put(40.5,60){$P(y_1)$}	
			\put(80,87.5){$y=1$}
			\put(80,8.5){$y=-1$}
			\put(78,50.5){$x$}
			\put(41.5,97){$y$}	
			\put(59,52){$\varphi$}
			\put(26,40){$\varphi$}		
		\end{overpic}
		
	\end{center}
	\caption{Illustration of the first return of $y_1\in[0,1]$.}\label{Fig8}
\end{figure}
Observe also that $\varphi(-y)=f_4(y)$. Hence it follows from \eqref{16} that,
	\[P(y_1)=\varphi(-\varphi(-y_1))=\varphi(-f_4(y_1))=f_4^2(y).\]
This proves the claim. Since $f_4([0,1])=[0,1]$, it follows that $P([0,1])=[0,1]$ and thus $[0,1]$ is invariant by $P$. From \eqref{21} it is clear that if $y\in[0,1]$, then the pieces of orbits of $X_1$ and $X_2$ passing through $(0,y)$ spans the set,
	\[K=\{(x,y)\in\mathbb{R}^2\colon x^2+y^2\leqslant1\}.\]
Hence $K$ is a positive invariant compact set of $X$. Moreover it follows from Proposition~\ref{P5} that $P$ is chaotic on the interval $[0,1]$ and thus it is clear that $X$ is chaotic on $K$.

\section{On the structure of the first return map}\label{Sec7}

Let $X\in\mathfrak{X}_n$. We remind from the proofs of Theorems~\ref{Main1} and \ref{Main2} that its associated first return map $P\colon\Sigma\to\Sigma$ is given by
	\[P(y)=\left\{\begin{array}{ll}
		\varphi_n(\eta_2-\varphi_n(\eta_1-y)) & \text{if } y\in J_1, \vspace{0.2cm} \\
		\varphi_n(\varphi_n(\eta_1-y)) & \text{if } y\in J_2, \vspace{0.2cm} \\
		\varphi_n(\eta_2-\varphi_n(y)) & \text{if } y\in J_3, \vspace{0.2cm} \\
		\varphi_n(\varphi_n(y)) & \text{if } y\in J_4, \vspace{0.2cm} \\
	\end{array}\right.\]
where $J_j\subset\mathbb{R}$ are such that $\cup_j J_j=\mathbb{R}$. In this section we provide a characterization of these subsets.

\begin{proposition}
	Given $n\in\mathbb{N}$, let $X\in\mathfrak{X}_n$ and $P\colon\Sigma\to\Sigma$ be its associated first return map. If $J_j\neq\emptyset$, then it is the union of at most $n$ pairwise disjoint intervals.
\end{proposition}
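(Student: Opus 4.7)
The plan is to describe each $J_j$ as the common solution set of two explicit sign conditions---one linear, one polynomial of degree $n$ in $y_1$---and then to bound the number of its connected components using elementary real algebra.

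\emph{Step 1: defining inequalities.} Inspecting the case analysis behind the piecewise formulas \eqref{10} and \eqref{14}, the membership of $y_1\in\Sigma$ in $J_1\cup J_2$ versus $J_3\cup J_4$ is decided by whether the forward flow of $X_1$ at $(0,y_1)$ enters $A_1$, that is, by the sign of $P_1(0,y_1)=-\delta_1(4b_1^2+\omega_1^2)y_1+d_1$; by \eqref{19} this is linear in $y_1$, and so it partitions $\Sigma$ into two half-lines. Given that choice, the split between $J_1$ and $J_2$ (resp.\ $J_3$ and $J_4$) is decided by whether the forward flow of $X_2$ at the post-jump point $y_3$ enters $A_2$, equivalently by the sign of $P_2(0,y_3)=-\delta_2(4b_2^2+\omega_2^2)y_3+d_2$, which is linear in $y_3$. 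Since $y_3=\varphi_n(\eta_1-y_1)$ in cases $J_1,J_2$ and $y_3=\varphi_n(y_1)$ in cases $J_3,J_4$, after substitution this second condition becomes a polynomial inequality of degree $n$ in $y_1$.

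\emph{Step 2: counting intervals.} A real polynomial of degree $n$ has at most $n$ distinct real roots, so its sign partitions $\mathbb{R}$ into at most $n+1$ open intervals; since signs alternate across simple roots (and stay constant across even-multiplicity roots), the subset on which the polynomial is strictly positive (respectively, strictly negative) is the union of at most $\lceil(n+1)/2\rceil\leq n$ pairwise disjoint open intervals, for every $n\geq 1$. Each $J_j$ is obtained by intersecting one such sign set with a half-line coming from the first condition, and intersection with a half-line can only truncate existing intervals---never create new ones---so $J_j$ is a union of at most $n$ pairwise disjoint intervals.

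\emph{Main obstacle.} The only delicate point is the treatment of boundary values of $y_1$ at which one of the sign inequalities becomes an equality: these correspond geometrically to orbits of $X_1$ or $X_2$ tangent to $\Sigma$, which occur at finitely many isolated values of $y_1$ and can be folded into the closures of the open intervals without changing the count. Apart from this, the argument is routine once the linear/polynomial description of each $J_j$ is in place, which itself is merely a bookkeeping of the four branches already spelled out in the proofs of Theorems~\ref{Main1} and \ref{Main2}.
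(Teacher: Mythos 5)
Your proof is correct and follows essentially the same route as the paper's: the paper writes each $J_j$ as $I_1^{\pm}\cap(\varphi\circ\sigma)^{-1}(I_2^{\pm})$, where $I_i^{\pm}$ are the two half-lines of $\Sigma$ cut out by the tangency point of $X_i$ with $\Sigma$ --- precisely your two sign conditions on $P_1(0,y_1)$ and $P_2(0,y_3)$ --- and then counts components of the preimage of a half-line under a degree-$n$ polynomial exactly as you do. Your closing remark about tangencies matches the paper's observation that the endpoints' inclusion depends on the visibility of the fold points $\mathcal{O}_i$.
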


\begin{proof} Let $\mathcal{O}_i\in\Sigma$ be the unique tangent point between the orbits of $X_i$ and $\Sigma$. Observe that $\Sigma\backslash\{\mathcal{O}_i\}$ has two connected components and that $X_i$ is transversal to $\Sigma$ at them. Moreover in one of them the orbits of $X_i$ enter the region $A_i$, while in the other component the orbits leave it. Let $I_i^+$ and $I_i^-$ denote such components, respectively, see Figure~\ref{Fig11}.	
\begin{figure}[h]
	\begin{center}
		\begin{overpic}[height=7cm]{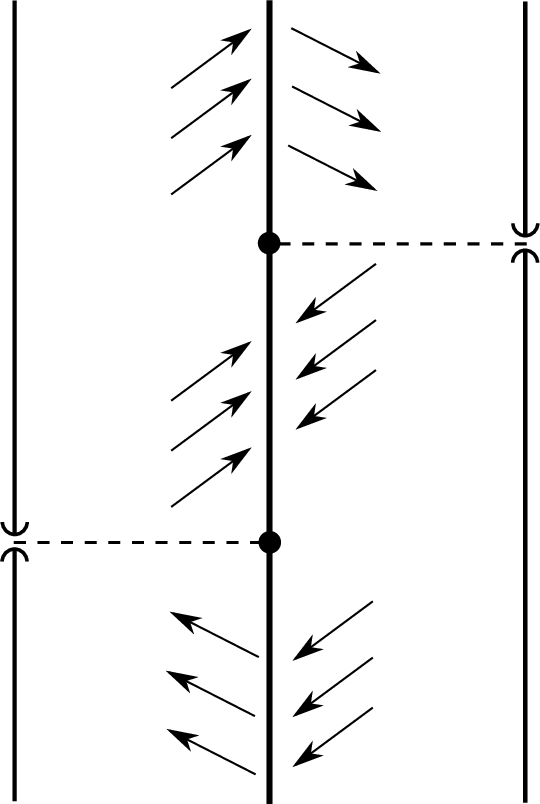} 
			\put(29,98){$\Sigma$}
			\put(27,28){$\mathcal{O}_1$}
			\put(35,71.5){$\mathcal{O}_2$}
			\put(3,0){$I_1^+$}
			\put(3,97){$I_1^-$}
			\put(59,0){$I_2^-$}
			\put(59,97){$I_2^+$}
		\end{overpic}		
	\end{center}
	\caption{Illustration of the intervals $I_i^\pm$.}\label{Fig11}
\end{figure}
Observe that the orbit of $X_i$ with initial condition $\mathcal{O}_i$ also enters our leaves $A_i$, depending on whether $\mathcal{O}_i$ is a \emph{visible} or \emph{invisible} fold point, respectively. Hence exactly one of the components $I_i^+$ or $I_i^-$ can be extended to $\mathcal{O}_i$, $i\in\{1,2\}$. 

Let $y\in\Sigma$. Observe that $y\in J_1$ (i.e. the orientation of the piece of orbits of its first return agree) if, and only if, $y\in I_1^+$ and $\varphi(\eta_1-y)\in I_2^+$. Hence, if we let $\sigma(y)=\eta_1-y$, it follows that $y\in J_1$ if, and only if, $y\in I_1^+\cap(\varphi\circ\sigma_1)^{-1}(I_2^+)$. Since $I_2^+$ is an interval, $\sigma$ is an affine map and $\varphi$ is a polynomial of degree $n$, it follows that $(\varphi\circ\sigma)^{-1}(I_2^+)$ is a collection of at most $n$ pairwise disjoint intervals and thus 
	\[J_1=I_1^+\cap(\varphi\circ\sigma)^{-1}(I_2^+)\]
is a collection of at most $n$ pairwise disjoint intervals. Similarly, we observe that
	\[J_2=I_1^+\cap(\varphi\circ\sigma)^{-1}(I_2^-), \quad J_3=I_1^-\cap(\varphi\circ\sigma)^{-1}(I_2^+), \quad J_4=I_1^-\cap(\varphi\circ\sigma)^{-1}(I_2^-),\]
and the proof follows similarly. \end{proof}

\begin{remark}
	Observe that the connected components of $J_j$, $j\in\{1,2,3,4\}$, may be open, closed or neither of them, depending on the visibility of the fold points $\mathcal{O}_i$, $i\in\{1,2\}$.
\end{remark}

\section{Conclusion and further thoughts}\label{Sec8}

The main inspiration of this work is the paper of Llibre and Teixeira~\cite{LliTei2018} about Filippov systems formed by two linear centers and having $x=0$ as discontinuity line. One of the main conclusions of the paper is that such systems cannot have limit cycles. Actually, either it does not have periodic orbits or every orbit is periodic. Therefore, its dynamics is relatively simple.

Inspired by this work and the raising notion of hybrid systems, we wondered what could happen if we allow \emph{jumps} on the discontinuity line. As a result, we discovered not only that limit cycles are allowed with arbitrarily small ``perturbations'' in the jump, such as $\varphi(y)=ay$, with $a=1\pm\varepsilon$ and $\varepsilon>0$ small (remind that if $\varphi(y)=y$, then our hybrid systems boils down to the Filippov system studied at \cite{LliTei2018}), but also that such systems allow \emph{chaotic} dynamics. Therefore, we conclude that hybrid systems with simple formulation can have rich dynamics. 

We also observe that a complete characterization of the dynamics of $X\in\mathcal{X}_n$ depends on the characterization of its first return map, which is a piecewise polynomial map on the real line. This, together with the fact that the systems studied here are a generalization of the Filippov systems studied in \cite{LliTei2018}, illustrates that hybrid systems can be seen as a three-fold bridge connecting continuous, piecewise continuous and discrete dynamical systems.

\section*{Acknowledgments}

The first author is partially supported by Agencia Estatal de Investigaci\'on grant PID2022-136613NB-100, AGAUR (generalitat de Catalunha) grant 2021SGR00113 and by the Reial Acadèmia de Cièncias i Arts de Barcelona. The second author is supported by S\~ao Paulo Research Foundation (FAPESP), grants 2019/10269-3, 2021/01799-9 and 2022/14353-1.

\end{document}